\documentclass[11pt,reqno]{article}
\usepackage{amsmath}
\usepackage{amsmath}
\usepackage{amsthm}
\usepackage{amssymb}
\usepackage{epsfig}

\usepackage{calc}
\usepackage{xspace}
\usepackage{comment}
\usepackage{paralist}

\usepackage{url}
\usepackage{setspace}
\usepackage{rotating}
\usepackage{amsmath,amssymb}
\usepackage{subfigure}
\usepackage{array,arydshln}	
\usepackage{multirow}
\usepackage{booktabs}
\usepackage{color,colortbl}
\usepackage{authblk}
\usepackage{hyperref}
\usepackage{enumitem}
\usepackage{xspace}
\usepackage{comment}
\usepackage{tabularx}	
\usepackage[table]{xcolor}
\usepackage{adjustbox}
\usepackage{float}

 \hoffset=-25.5 truemm \voffset=-26. truemm \setlength{\textheight}{8.7 true in} \setlength{\textwidth}{6.5true in}
 \setlength{\oddsidemargin}{1.0 true in} \setlength{\evensidemargin}{.25 true in} \setlength{\topmargin}{0.615 true in}
 \setlength{\baselineskip}{1.75 \baselineskip}
\DeclareMathOperator*{\argmin}{argmin}
\DeclareMathOperator*{\argmax}{argmax}

%\mydoublespace
\newtheorem{lemma}{Lemma}
\newtheorem{theorem}{Theorem}
\newtheorem{Prop}{Proposition}

\newtheorem{@remark}{\bf Remark}
\newenvironment{remark}{\begin{@remark}\rm}{\end{@remark}}

\newcommand {\A} {\alpha}
\newcommand {\B} {\beta}
\newcommand{\W}{\omega}
\newcommand {\E} {E}
\newcommand {\T} {\theta}
\newcommand{\HT} {\hat{\theta}_{\A,n}}
\newcommand {\ep} {\epsilon}
\newcommand {\pa} {\partial}
\newcommand {\paa}{\partial^2_{\T\T'}}

\newcommand {\ud}{\mathrm{d}}

\title{ Test for parameter change in the presence of outliers:\\ the density power divergence based approach}
\author[1]{Junmo Song}
\affil{Department of Statistics, Kyungpook National University}
\author[2]{Jiwon Kang}
\affil{Department of Computer Science and Statistics, Jeju National University}
\date{}

\begin{document}
\maketitle

\begin{abstract}
This study considers the problem of testing for parameter change, particularly in the presence of outliers. To lessen the impact of outliers, we
propose a robust test based on the density power divergence introduced by Basu et al. (Biometrika, 1998), and then  derive its limiting null distribution. Our test procedure can be naturally extended to any parametric model to which MDPDE can be applied. To illustrate this, we apply our test procedure to GARCH models.  We demonstrate the validity and robustness of the proposed test through a simulation study. In a real data application to the Hang Seng index, our test locates some change-points that are not detected by the existing tests such as the score test and the residual-based CUSUM test.
\end{abstract}
\noindent{\bf Key words and phrases}: test for parameter change, robust test, outliers, density power divergence, GARCH models.

%##################################  Introduction ###########################################################
\section{Introduction}

It is often observed, for example, that financial markets fluctuate widely by economic and political events, and it is well known that such events can cause deviating observations in data or structural breaks in underlying models. Over the past decades, most of works have dealt with these phenomena separately. For the former, researchers have developed various robust methods for reducing the impact of outlying observations. For an overview of related theories and methods, see, for example, Marona et al. (2006). %In the field of finance, many authors proposed statistical models incorporating jump terms to fit spiky observations.
 %Change point analysis for the latter has also attracted a great deal of attention and vast amount of literature have been devoted to this area.
 The latter has also been extensively studied in the field of change point analysis and vast amount of literature have been devoted to this area.
   See the recent review papers by Aue and Horváth (2013) and Horváth and Rice (2014). However, there have been relatively few studies addressing the cases that both situations are involved.

This paper is concerned with the problem of testing for parameter change, particularly in the presence of outliers.
As is well known, classical estimators such as MLE are very sensitive to outliers. Since various test statistics are constructed based on such estimators, one may naturally surmise that existing tests are also likely to be affected by outliers. %There are several studies dealing with testing for parameter change in contaminated data.
In the literature, Tsay (1988) investigated a procedure for detecting outliers, level shifts, and variance change in a univariate time series and Lee and Na (2005) and Kang and Song (2015) introduced a estimates-based CUSUM test using a robust estimator. Recently, Fearnhead and Rigaill (2018) proposed the penalized cost function to detect the changes in the location parameter and Song (2020) proposed trimmed residual based CUSUM test for diffusion processes. These studies consistently addressed that the previous parameter change tests are also severely damaged by outliers, which obviously indicates that it is not easy to determine whether the testing results are due to genuine changes or not when outlying observations are included in a data set being suspected of having parameter changes.

In this study, we propose a robust test for parameter change using a divergence based method. Divergences are usually taken to evaluate the discrepancy between two probability distributions, but some of them have been popularly used as a way to construct robust estimators. See, for example, Basu et al. (1998), Fujisawa and Eguchi (2008), and Ghosh and Basu (2017) for density power (DP), $\gamma$-, and S-divergence based estimation methods, respectively. In this study, we employ DP divergence (DPD) to construct a robust test.
Since Basu et al. (1998) introduced the DPD-based estimation method that yields the so-called minimum DPD estimator (MDPDE), the estimation method has been successfully applied to various parametric models. See,for example, Lee and Song (2009), Kang and Lee (2014), and  Song (2017).
These studies showed that the corresponding MDPDEs have a strong robust property with little loss in efficiency. Recently, the DPD based method has been extended to testing problems.  Basu et al. (2013, 2016) used the objective function of MDPDE to propose Wald-type tests and Ghosh et al. (2016) investigated its properties.
Like the MDPDE, the induced tests are found to inherit the robust and efficient properties, and such results motivate us to consider a robust test based on DPD approach. Meanwhile, it is noteworthy that tests based on other divergences such as $\phi$- and S-divergences have also been studied before by several authors. See, for example, Batsidis et al.(2013) and  Ghosh et al. (2015). For  statistical inference based on divergences, we refer the reader to Pardo (2006).

Our robust test is constructed generalizing the score test for parameter change.  More specifically, the test in this paper is obtained by replacing the score function in the score test with the derivatives of the objective function of MDPDE. Since the score function is actually induced from Kullback-Leibler (KL) divergence, our  test can be considered as  a DP divergence  version of the score test. Noting that the DP divergence includes KL divergence, the proposed test is expected to enjoy the merits of the score test as well as robust and efficient properties. For instance, according to the previous studies such as Song and Kang (2018), the score test has a merit in that it produces stable sizes especially when true parameter lies near the boundary of parameter space. Furthermore, just as the score test can be applied to general parametric model, our test procedure is also applicable to any parametric model to which MDPDE can be applied. To demonstrate this, we first introduce the way of constructing a DPD-based test in i.i.d. cases and then apply our test procedure to GARCH models.

This paper is organized as follows. In Section 2, we propose a DPD-based test for parameter change and derive its asymptotic null distribution. In Section 3, we extend our method to GARCH models. We examine our method numerically through Monte Carlo simulations in Section 4. Section 5 illustrates a real data application and Section 6 concludes the paper. The technical proofs are provided in Appendix.

%############################### Section2 ####################################################################
\section{DP divergence based test for parameter change}\label{Sec:2}

\setcounter{equation}{0}
In this section, we review the DP divergence and MDPDE by Basu et al. (1998) and then introduce a robust test statistic based on the divergence.

For two density functions $f$ and $g$, DP divergence is defined by
\begin{eqnarray*}\label{DPD}
d_\alpha (g, f):=\left\{\begin{array}{lc}
\displaystyle\int\Big\{f^{1+\alpha}(z)-(1+\frac{1}{\alpha})\,g(z)\,f^\alpha(z)+\frac{1}{\alpha}\,g^{1+\alpha}(z)\Big\} dz &,\alpha>0, \vspace{0.3cm}\\
\displaystyle\int g(z)\big\{ \log g(z)-\log f(z) \big\} dz
&,\alpha=0.
\end{array} \right.
\end{eqnarray*}
 As special cases, the divergence includes the KL divergence and $L_2$ distance when $\A=0$ and $\A=1$, respectively.
Since $d_\alpha(f,g)$ converges to $d_0(f,g)$ as $\A\rightarrow 0$, the above divergence with
$0<\alpha<1$ provides a smooth bridge between KL divergence and the $L_2$ distance.

Let $X_1, \cdots, X_n$ be a random sample from an unknown density $g$. To define an estimator using the divergence, consider a family of parametric densities $\{ f_{\theta} | \theta \in \Theta \subset \mathbb{R}^d \}$. Then, the
MDPDE with respect to the parametric family  $\{ f_{\theta} \}$ is defined as the estimator that minimizes the empirical version of the divergence $d_\A(g,f_\T)$. That is,
\begin{eqnarray}\label{MDPDE}
\hat \theta_{\alpha, n} = \argmin_{\theta \in \Theta}\, \frac{1}{n} \sum_{i=1}^n l_{\alpha}(X_i;\T):=\argmin_{\theta \in \Theta} H_{\alpha,n}(\theta),
\end{eqnarray}
where \begin{eqnarray*}
l_\alpha(X_i;\theta) = \left\{ \begin{array}{ll}
   \displaystyle  \int f_\theta^{1+\alpha}(z) dz - \left( 1 + \frac{1}{\alpha} \right)
     f_\theta^{\alpha}(X_i)    & \mbox{, $\alpha > 0$,}\vspace{0.15cm}\\
   \displaystyle  - \log f_\theta(X_i)      & \mbox{, $\alpha = 0$.}
   \end{array}
 \right.
\end{eqnarray*}
Here, the tuning parameter $\alpha$ plays an important role in controlling the trade-off between robustness and asymptotic efficiency of the estimator. Basu et al. (1998) showed that $\hat \theta_{\alpha,n}$ is weakly consistent for $\T_\A := \argmin_{\theta \in \Theta}\  d_{\alpha} (g, f_{\theta})$  and asymptotically normal, and demonstrated that the estimators with small $\A$ have strong robust properties with little loss in asymptotic efficiency relative to MLE.

% We use this estimator in constructing test statistics to enjoy such properties.
In order to focus on the parameter change problem, we assume hereafter that $g$ belongs to the parametric family  $\{ f_{\theta} \}$, that is, $g=f_{\T_0}$ for some $\T_0 \in \Theta$. In this case, $\T_\A$ becomes equal to the true parameter vector $\T_0$. From now, for notational convenience, we use $\pa_\T$ and $\paa$ to denote $\frac{\pa}{\pa\T}$ and $\frac{\pa^2}{\pa\T\pa\T'}$, respectively.

We now intend to test the following hypotheses in the presence of outliers:
\begin{eqnarray*}\label{H}
H_0: X_1, \cdots, X_n \sim i.i.d.\ f_{\T_0}\qquad v.s.\qquad H_1:\textrm{not~} H_0.
\end{eqnarray*}
For this task, we construct a test statistics using the derivative of the objective function in (\ref{MDPDE}). Our background idea coincides with that of the score test by Horv\'{a}th and Parzen (1994). Horv\'{a}th and Parzen (1994) showed that under $H_0$,
\[\frac{1}{\sqrt{n}}\sum_{i=1}^{[ns]} \pa_\T \log f_{\hat{\T}_n}(X_i)= -\frac{[ns]}{\sqrt{n}}\pa_\T H_{\A=0,[ns]}(\hat{\T}_n) \stackrel{w}{\longrightarrow}\, J^{-1/2} B^o_d(s)\quad
\rm{in}\ \ \mathbb{D}\,\big( [0,1],\, \mathbb{R}^d\big)\,,\]
where $\hat{\T}_n$ and $J$ denote the MLE and the Fisher information matrix, respectively, and $\{B^o_d(s) |s\geq0\}$ is a $d$-dimensional standard Brownian bridge, and then used the above to propose the score test for parameter change. In this study, we extend their result to the case of $\A>0$.

By using Taylor's theorem, we have that  for each $ s \in [0,1]$,
\begin{eqnarray}\label{paH}
\frac{[ns]}{\sqrt{n}}\pa_{\theta}{H}_{\A,[ns]}(\HT)=\frac{[ns]}{\sqrt{n}}\pa_{\theta}{H}_{\A,[ns]}(\T_0)+\frac{[ns]}{n}\paa {H}_{\A,[ns]}(\T^*_{\A,n,s}) \sqrt{n}(\HT-\theta_0)
\end{eqnarray}
where $\T^*_{\A,n,s}$ is an intermediate point between $\theta_0$ and $\HT$.
Since $\pa_{\theta}{H}_{\A,n}(\HT)=0$, we have that for $s=1$,
\begin{eqnarray*}
\sqrt{n}\,\pa_{\T}{H}_{\A,n}(\T_0)+\paa {H}_{\A,n}(\T^*_{\A,n,1}) \sqrt{n}(\HT-\T_0)=0,
\end{eqnarray*}
and thus we can express that
\begin{eqnarray}\label{hat.theta}
\sqrt{n}(\HT-\T_0)= J_\A^{-1}\sqrt{n}\,\pa_{\T}{H}_{\A,n}(\T_0)+J_\A^{-1}(B_{\A,n}+J_\A)\sqrt{n}(\HT-\T_0),
\end{eqnarray}
where $B_{\A,n}=\paa{H}_{\A,n}(\T^*_{\A,n,1})$ and $J_\A$ is the one defined in the assumption {\bf A6} below. Here, putting the above into (\ref{paH}), we obtain
\begin{eqnarray}\label{paH_final}
\frac{[ns]}{\sqrt{n}}\pa_{\theta}{H}_{\A,[ns]}(\HT)
&=&\frac{[ns]}{\sqrt{n}}\pa_{\theta}{H}_{\A,[ns]}(\T_0)+\frac{[ns]}{n}\paa{H}_{\A,[ns]}(\T^*_{\A,n,s})
J_\A^{-1}\sqrt{n}\,\pa_{\T}{H}_{\A,n}(\T_0)\nonumber \\
&&+\frac{[ns]}{n}\paa {H}_{\A,[ns]}(\T^*_{\A,n,s})
J_\A^{-1}(B_{\A,n}+J_\A)\sqrt{n}(\HT-\T_0).
\end{eqnarray}
To derive the limiting null distribution of the above, the strong consistency of $\hat{\T}_{\A,n}$ is required. For this, we assume the following conditions to ensure the strong uniform convergence of the objective function $H_{\A,n}(\T)$:
\begin{enumerate}
\item[\bf A1.] The parameter space $\Theta$ is compact.
\item[\bf A2.] The density $f_\theta$ and  the integral $\int f_\T^{1+\A}(z)dz$ are continuous in $\theta$.
\item[\bf A3.] There exists a function $B(x)$ such that $|l_\A(x;\T)| \leq B(x)$ for all $x$ and $\T$ and $\E[B(X)]<\infty$.
%\item [\bf A3.] $l_\A(x;\T)$ is dominated by an integrable random variable that is free from $\T$.
\end{enumerate}
By the assumption {\bf A2}, $l_\A(x;\T)$ becomes a continuous function in $\T$. Hence, it follows that
\[\sup_{\T \in \Theta} \bigg|\frac{1}{n}\sum_{i=1}^n l_\A(X_i;\T) -\E[l_\A(X;\T)] \bigg|\stackrel{a.s.}{\longrightarrow}0\]
(cf. chapter 16 in Furgeson (1996)). Noting the fact that $\E[l_\A (X;\T)] =d_\A (f_{\T_0},f_\T)-\frac{1}{\A}\int f_{\T_0}(z)\ud z$, one can see that  $\E[l_\A (X;\T)]$ has the minimum value at $\T_0$. Hence, $\hat\theta_{\A,n}$ converges almost surely to $\theta_0$ by the standard arguments.
Assumption {\bf A3} is ensured by such condition that $\sup_{x,\T \in \Theta} f_\T(x) <\infty$. This condition can usually be obtained by restricting the range of
scale parameter. For example, when the normal parametric family $\{N(\mu,\sigma^2)|\mu\in\mathbb{R}, \sigma>0\}$ is considered, the condition is obtained by considering the parameter space $\Theta=\{(\mu,\sigma)|\ \sigma\geq c\}$ for some $c>0$. Another set of conditions for the strong consistency can be found, for example, in Lee and Na (2005). We introduce further assumptions. Throughout this paper, the symbol $\|\cdot\|$ denotes any norm for matrices and vectors.
\begin{enumerate}
\item[\bf A4.] The integral $\int f_\T^{1+\A}(z)dz$ is twice differentiable with respect to $\T$ and the derivative can be taken under the integral sign.
\item[\bf A5.] $\paa l_\A (x;\T)$ is continuous in $\T$ and there exists an open neighborhood $N(\T_0)$ of $\T_0$ such that $\E[ \sup_{\T \in N(\T_0)} \big\| \paa l_\A (X;\T)\big\|] <\infty$.
\item[\bf A6.] The matrices $K_\A$ and  $J_\A$ defined by
\begin{eqnarray*}
K_\A = \E\big[ \pa_\T l_\A(X;\T_0) \pa_{\T'} l_\A(X;\T_0)\big]\quad\mbox{and}\quad
J_\A=-\E \big[ \paa l_\A(X;\T_0)\big]
\end{eqnarray*}
%\[J_\A:=\E \big[ \paa l_\A(X;\T_0)\big]=(1+\A)\int f_{\T_0}^{\A-1}(z) \pa_\T f_{\T_0}(z)  \pa_{\T'} f_{\T_0}(z) \ud z\]
%\begin{eqnarray*}
% K_\A &:=& \frac{1}{(1+\A)^2}\, \E\big[ \pa_\T l_\A(X;\T_0) \pa_{\T'} l_\A(X;\T_0)\big]\\
%&=&\int f_{\T_0}^{2\A-1}(z) \pa_\T f_{\T_0}(z)\pa_{\T'} f_{\T_0}(z) dz
% - \int f_{\T_0}^\A(z) \pa_\T f_{\T_0}(z)dz \int f_{\T_0}^\A(z) \pa_{\T'} f_{\T_0}(z)dz
% \end{eqnarray*}
exist and are non-singular.
\end{enumerate}
%Note that assumptions {\bf A2}, {\bf A3}, and {\bf A5} in Basu et al. (1998) implies the above assumption {\bf A5}.
% Lemmas \ref{Lm2} and \ref{Lm3} in Appendix below state that the first two terms in the RHS of (\ref{paH_final}) converges weakly to a Brownian bridge and that the last term is asymptotically negligible, respectively. From this, we obtain the following result.
%From Lemma \ref{Lm2} and \ref{Lm3} in Appendix below, we obtain the main result.
The following is the first main result in this study, which is used as the building block for constructing a robust test.
%####################################  Theorem 1  #########################################################
\begin{theorem}\label{Thm1}
Suppose that the assumptions {\bf A1}- {\bf A6} hold. Then, under $H_0$, we
have that for $\alpha\geq0$,
\begin{eqnarray*}
K_\A^{-1/2}\frac{[ns]}{\sqrt{n}}\pa_{\theta}H_{\A,[ns]}(\HT)\stackrel{w}{\longrightarrow}\,B^o_d(s)\quad
\rm{in}\ \ \mathbb{D}\,\big( [0,1],\, \mathbb{R}^d\big)\,,
\end{eqnarray*}
where $\{B^o_d(s)|s\geq0\}$ is a $d$-dimensional standard Brownian bridge.
\end{theorem}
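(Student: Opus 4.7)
The plan is to analyze the three-term decomposition (\ref{paH_final}) and show that the first two terms combine to produce the Brownian bridge structure, while the third term is asymptotically negligible. Everything rests on (i) a functional CLT for the score process, (ii) uniform (in $s$) convergence of the Hessian average, and (iii) $\sqrt{n}$-consistency of $\hat{\theta}_{\A,n}$.

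First, I would handle the leading term $\frac{[ns]}{\sqrt{n}}\pa_{\T}H_{\A,[ns]}(\T_0) = \frac{1}{\sqrt{n}}\sum_{i=1}^{[ns]} \pa_{\T}l_{\A}(X_i;\T_0)$. The summands $U_i := \pa_{\T}l_{\A}(X_i;\T_0)$ are i.i.d.\ under $H_0$. Differentiating $E[l_{\A}(X;\T)]$ under the integral (justified by {\bf A4}) and using that $\T_0$ is the minimizer gives $E[U_i]=0$, and {\bf A6} supplies $\operatorname{Var}(U_i)=K_{\A}$. The multivariate functional CLT (Donsker) then yields $\frac{1}{\sqrt{n}}\sum_{i=1}^{[ns]} U_i \stackrel{w}{\longrightarrow} K_{\A}^{1/2} B_d(s)$ in $\mathbb{D}([0,1],\mathbb{R}^d)$, where $B_d$ is a standard $d$-dimensional Brownian motion.

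Next, I would establish the auxiliary uniform convergence
\[
\sup_{s\in[0,1]}\Bigl\|\tfrac{[ns]}{n}\paa H_{\A,[ns]}(\T^*_{\A,n,s}) + s J_{\A}\Bigr\| \stackrel{a.s.}{\longrightarrow} 0 .
\]
Because $\frac{[ns]}{n}\paa H_{\A,[ns]}(\T) = \frac{1}{n}\sum_{i=1}^{[ns]}\paa l_{\A}(X_i;\T)$, I would combine a Glivenko-Cantelli-type uniform SLLN in $s$ for the partial sums with a local uniform SLLN in $\T$ on $N(\T_0)$ (available from {\bf A5} and continuity in $\T$). Since the discussion after {\bf A3} already gives $\hat{\T}_{\A,n}\stackrel{a.s.}{\to}\T_0$, the intermediate points $\T^*_{\A,n,s}$ eventually lie in $N(\T_0)$ uniformly in $s$, so the two pieces glue together. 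Setting $s=1$ in particular gives $B_{\A,n}\stackrel{a.s.}{\to}-J_{\A}$, and then from (\ref{hat.theta}) together with tightness of $\sqrt{n}\,\pa_{\T}H_{\A,n}(\T_0)$ (Step 1 at $s=1$) I obtain $\sqrt{n}(\hat{\T}_{\A,n}-\T_0)=O_P(1)$.

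Finally, I would plug these ingredients into (\ref{paH_final}). The second term becomes $\frac{[ns]}{n}\paa H_{\A,[ns]}(\T^*_{\A,n,s}) J_\A^{-1}\sqrt{n}\pa_{\T}H_{\A,n}(\T_0)$; by Step 2 the matrix factor converges uniformly to $-sJ_\A$, so by Slutsky and the continuous mapping theorem (applied jointly with Step 1, since $\sqrt{n}\pa_\T H_{\A,n}(\T_0)$ is just the $s=1$ value of the same score process) this term converges weakly to $-sK_\A^{1/2}B_d(1)$ in $\mathbb{D}([0,1],\mathbb{R}^d)$. The third term is the product of the $O(1)$ factor $\frac{[ns]}{n}\paa H_{\A,[ns]}(\T^*_{\A,n,s})J_\A^{-1}$, the $o_P(1)$ factor $(B_{\A,n}+J_\A)$, and the $O_P(1)$ factor $\sqrt{n}(\hat{\T}_{\A,n}-\T_0)$, hence is $o_P(1)$ uniformly in $s$. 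Adding the three contributions,
\[
\tfrac{[ns]}{\sqrt{n}}\pa_{\T}H_{\A,[ns]}(\HT) \stackrel{w}{\longrightarrow} K_\A^{1/2}\bigl(B_d(s)-sB_d(1)\bigr)=K_\A^{1/2}B^o_d(s),
\]
and multiplying by $K_\A^{-1/2}$ delivers the theorem. The main technical obstacle I anticipate is Step 2: making the uniform-in-$(s,\T)$ convergence of the Hessian process rigorous, particularly controlling the behaviour near $s=0$ where $[ns]/n$ is small but the intermediate point $\T^*_{\A,n,s}$ need not be close to $\T_0$ unless the a.s.\ consistency is used uniformly.
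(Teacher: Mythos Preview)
Your proposal is correct and follows essentially the same route as the paper: the same three-term decomposition (\ref{paH_final}), the same functional CLT for $\{\pa_\T l_\A(X_i;\T_0)\}$, the same uniform Hessian control (your Step~2 is exactly the paper's Lemma~\ref{Lm1}), and the same Slutsky/continuous-mapping conclusion. The paper's $I^o_{\A,n}(s)$ is precisely your ``first term plus $-s\sqrt{n}\,\pa_\T H_{\A,n}(\T_0)$'' bridge proxy.

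One clarification on your anticipated obstacle: the intermediate point $\T^*_{\A,n,s}$ is not actually a problem near $s=0$, since $\|\T^*_{\A,n,s}-\T_0\|\leq\|\hat\T_{\A,n}-\T_0\|$ and the right side does not depend on $s$; strong consistency of $\hat\T_{\A,n}$ gives the required uniformity automatically. The genuine difficulty near $s=0$ is that $\paa H_{\A,[ns]}(\T_0)$ is an average of only $[ns]$ terms and need not be close to $-J_\A$. The paper handles this by splitting at $k=\sqrt{n}$: for $k\leq\sqrt{n}$ the prefactor $k/n$ kills the (bounded) deviation, while for $k>\sqrt{n}$ the SLLN applies directly. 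You would need this (or an equivalent maximal-inequality device) to make your Step~2 rigorous.
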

Using Theorem \ref{Thm1}, one can construct a DPD based test for parameter constancy as follows.
\begin{theorem}\label{Thm2}
Suppose that the assumptions {\bf A1}- {\bf A6} hold. Then, under $H_0$, we
have that for $\alpha\geq0$,
\begin{eqnarray*}
T_n^\alpha:=\max_{1\leq k \leq n}
\frac{k^2}{n}\pa_{\theta'}H_{\A,k}(\HT)K_\A^{-1}
\pa_{\theta}H_{\A,k}(\HT)\stackrel{d}{\longrightarrow}
\sup_{0\leq s \leq 1} \big\|B_d^o(s)\big\|_2^2.
\end{eqnarray*}
\end{theorem}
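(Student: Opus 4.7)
The plan is to recognize $T_n^\alpha$ as the squared sup-norm of the $\mathbb{D}([0,1],\mathbb{R}^d)$-valued process whose weak limit is provided by Theorem \ref{Thm1}, and then to conclude by the continuous mapping theorem. In this sense Theorem \ref{Thm2} is essentially a packaging of Theorem \ref{Thm1}, so the genuine work has already been done and I do not anticipate a real obstacle.

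First I would introduce the càdlàg process
$$W_n(s) := K_\A^{-1/2}\,\frac{[ns]}{\sqrt{n}}\,\pa_\T H_{\A,[ns]}(\HT), \qquad s \in [0,1],$$
setting $W_n(s):=0$ on $[0,1/n)$ where $[ns]=0$. Evaluating at $s=k/n$ for $1 \le k \le n$ one has $[ns]=k$, hence
$$\|W_n(k/n)\|_2^2 \;=\; \frac{k^2}{n}\,\pa_{\T'}H_{\A,k}(\HT)\,K_\A^{-1}\,\pa_\T H_{\A,k}(\HT),$$
which is exactly the $k$-th term in the definition of $T_n^\alpha$. Since $W_n$ is constant on each interval $[k/n,(k+1)/n)$ and since the index $k=n$ contributes zero because $\pa_\T H_{\A,n}(\HT)=0$ is the first-order condition defining the MDPDE, this identifies
$$T_n^\alpha \;=\; \max_{1 \le k \le n} \|W_n(k/n)\|_2^2 \;=\; \sup_{s \in [0,1]} \|W_n(s)\|_2^2.$$

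Next I would invoke Theorem \ref{Thm1} to assert $W_n \Rightarrow B^o_d$ in $\mathbb{D}([0,1],\mathbb{R}^d)$. The functional $\Phi:\mathbb{D}([0,1],\mathbb{R}^d)\to\mathbb{R}$, $\Phi(f)=\sup_{s\in[0,1]}\|f(s)\|_2^2$, is continuous (in the Skorokhod topology) at every $f\in C([0,1],\mathbb{R}^d)$, and $B^o_d$ has continuous sample paths almost surely. The continuous mapping theorem therefore gives
$$T_n^\alpha \;=\; \Phi(W_n) \;\stackrel{d}{\longrightarrow}\; \Phi(B^o_d) \;=\; \sup_{0 \le s \le 1} \|B^o_d(s)\|_2^2,$$
which is the claim.

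The only points that require attention are the routine bookkeeping at the boundary indices $k=0$ and $k=n$ (both contribute zero, so that the max over $1\le k\le n$ agrees with the sup over $[0,1]$), and the standard observation that $\Phi$ is continuous at continuous limit paths so that the continuous mapping theorem applies directly. Neither of these is delicate.
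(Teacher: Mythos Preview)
Your argument is correct and is exactly the route the paper intends: Theorem~\ref{Thm2} is stated as an immediate consequence of Theorem~\ref{Thm1} (the paper gives no separate proof), and the passage is precisely via the continuous mapping theorem applied to the sup-norm-squared functional on $\mathbb{D}([0,1],\mathbb{R}^d)$. Your careful bookkeeping at $k=0$ and $k=n$ and the remark on continuity of $\Phi$ at continuous limit paths are the only details needed.
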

\noindent
To implement the test above, it needs to replace $K_\A$ with a consistent estimates. As a natural estimator of $K_\A$, one can consider to use
\[\hat{K}_\A = \frac{1}{n}\sum_{i=1}^n \pa_\T l_\A(X_i;\hat\T_{\A,n})\pa_{\T'} l_\A(X_i;\hat\T_{\A,n}).\]
Under the condition that $\E\sup_{\T \in \Theta^*} \big\| \pa_\T l_\A(X;\T)\pa_{\T'} l_\A(X;\T)\big\|<\infty$ for some neighborhood $\Theta^* \subset \Theta$ of $\T_0$, it can be shown that $\hat{K}_\A$ converges to $K_\A$ in probability.

\begin{remark}
Since $-H_{0,n}(\theta)$ is the log likelihood, $T_n^\A$ with $\A=0$ becomes the score test presented by Horv\'{a}th and Parzen (1994)
\end{remark}

\begin{remark}Noting that $\pa_\theta  H_{\alpha,n}(\hat\theta_{\A,n})=0$, it can be written that
\begin{eqnarray*}
\frac{[ns]}{\sqrt{n}}\pa_\theta H_{\A,[ns]}(\hat\theta_{\A,n})&=&\frac{1}{\sqrt{n}}\bigg\{ \sum_{i=1}^{[ns]} \pa_\theta l_\A (X_i;\hat\theta_{\A,n})-\frac{[ns]}{n} \sum_{i=1}^{n} \pa_\theta  l_\A(X_i ;\hat\theta_{\A,n})\bigg\} \\
&=&\frac{[ns]}{n} \Big(1-\frac{[ns]}{n}\Big) \sqrt{n} \Big(\frac{1}{[ns]}\sum_{i=1}^{[ns]} \pa_\theta l_\A(X_i ;\hat\theta_{\A,n}) -\frac{1}{n-[ns]}\sum_{i=[ns]+1}^n \pa_\theta l_\A(X_i; \hat\theta_{\A,n})\Big).
\label{locate}
\end{eqnarray*}
Our test can therefore be regarded as a CUSUM-type test based on $\{\pa_\T l_\alpha (X_i; \theta)\}$.  When $H_0$ is rejected by such CUSUM-type test,
the change-point is located as the argument that maximizes the absolute value of the cumulative sum in test statistics. See, for example, Robbins et al. (2011). For the same reason, the change-point estimator of the test above is obtained as
\[ \hat k:=\argmax_{1\leq k \leq n}\,  \frac{k^2}{n}\pa_{\theta'}H_{\A,k}(\hat{\theta}_{\A,n}){\hat K_\A}^{-1}
\pa_{\theta}H_\A(\hat{\theta}_{\A,n}).\]
\end{remark}

\begin{remark}\label{ALPHA}
Selection of the optimal $\A$ is an important practical issue. Several authors studied decision criteria to choose an optimal $\A$. For example,  Warwick (2005) proposed a selection rule for $\A$ that minimizes the asymptotic mean squared error, Fujisawa and Eguchi (2006)
introduced an adaptive method based on the Cramer–von Mises divergence, and  Durio and Isaia (2011) considered a bootstrap method based on the similarity measure between MDPD estimate and ML estimate. It should, however, be noted that the existing studies dealt with the problem in estimation situation, that is, under the assumption that there exists no parameter change.
In testing procedure, the selection of $\A$ is more complicated. If $H_0$ is not rejected  by the proposed test $T_n^\A$ with all $\A$ considered, one may employ the existing decision criteria aforementioned. However, in the cases where $H_0$ is rejected, indeed, it seems difficult to establish a decision rule. According to the simulation study below, the empirical power of $T_n^\A$ shows a tendency to decrease  with an increase in $\A$. In particular, $T_n^\A$ with small $\A$ produces powers almost similar to that of the score test when the data is uncontaminated, while keeping strong robustness. This indicates that small $\A$ may be preferred because a large $\A$ can lead to a significant loss in powers when the degree of contamination is not as large as speculated. Based on our simulation results, we recommend to use an $\A$ in [0.1,0.3] when practitioners do not find a proper decision rule.
\end{remark}
\begin{remark}
Although, to the best of our knowledge, there are not yet in-depth studies on systematic selection in testing problem, one may consider to choose an $\A$ in terms of forecasting performance. To this end,  for each $\A$ under consideration, conduct $T_n^\A$ to detect change-points. Then, using the data from the last change-point, estimate the model and calculate forecasting error measures such as root mean squared errors (cf. Song (2020)). Based on the obtained values, one can select a proper $\A$. In our data analysis, we illustrate the procedure to calculate forecasting errors using the model induced by each $T_n^\A$.
\end{remark}

\begin{remark} \label{BS}The binary segmentation procedure can be used to find multiple changes as do other CUSUM-type tests. That is, first, (i)
perform the test $T_n^\A$ on the whole series $\{X_1,\cdots,X_n\}$. If $H_0$ is rejected, split the series into two
subseries $\{X_t,\cdots, X_{\hat k}\}$ and $\{X_{\hat k +1},\cdots, X_n\}$, where  $\hat k$ is the one in Remark 2. Then, (ii) repeating the same
procedure on each subseries until no change-point is detected, one can locate multiple change-points. For
more details on the binary segmentation procedure of CUSUM-type test, see Aue and Horv\'{a}th(2013) and references therein.
\end{remark}

As studied in several papers stated in Introduction, the MDPD estimation procedure can be conveniently applied to various parametric models including time series models and multivariate models.
Once such MDPDE is set up, the robust test procedure can be extended to corresponding models. As an illustration, we address a DPD based test for GARCH models using the MDPDE established in Lee and Song (2009). All the remarks mentioned above still hold for the extended cases.

\section{DP divergence based test for GARCH models}\label{Sec:3}

Consider the following GARCH($p,q$) model:
\begin{eqnarray*}\label{GARCH}
\begin{split}
X_t &= \sigma_t\,\epsilon_t,\\
\sigma_t^2&:=\sigma_t^2(\T)= \omega+\sum_{i=1}^p\A_{i} X_{t-i}^2+\sum_{j=1}^q\B_{j}\sigma^2_{t-j},
\end{split}
\end{eqnarray*}
where $\T=(\W,\A_1,\cdots,\A_p,\B_1,\cdots,\B_q)'\in \Theta$ in $(0,\infty)\times[0,\infty)^{p+q}$ and $\{\epsilon_t | t\in
\mathbb{Z}\}$ is a sequence of i.i.d. random variables with zero mean and unit variance.
We assume that the process $\{X_t| t \in \mathbb{Z}\}$ is strictly stationary and ergodic.
The conditions for the existence of stationary and ergodic process can be found, for example, in  Bougerol and Picard (1992).
%According to Bougerol and Picard (1992), the above GARCH($p,q$) equation has a unique
%strictly stationary and ergodic solution if and only if the top Lyapunov exponent is strictly negative.
% we assume that the top Lyapunov exponent of the model (\ref{GARCH}) is strictly negative.
%Hereafter, we denote the true parameter by $\T_0$.

In order to estimate the unknown parameter in the presence of outliers,
Lee and Song (2009) introduced MDPDE for the GARCH model as follows:
%To be specific, suppose that $X_1, \cdots ,X_n$ are observed from the model $(\ref{GARCH})$ with $\ep_t \sim i.i.d.\ N(0,1)$.
%Then, the MDPDE is defined as
\begin{eqnarray}\label{root}
 \hat{\theta}_{\A,n}= \argmin_{\theta \in \Theta}\frac{1}{n}\sum_{t=1}^n \tilde{l}_\A(X_t;\T)
 := \argmin_{\theta \in \Theta}\tilde{H}_{\A,n}(\T),
\end{eqnarray}
 where
\begin{eqnarray*}
\tilde{l}_\A(X_t;\T) &=&
\left\{
\begin{array}{lc}{\displaystyle\Big(\frac{ 1}{\sqrt{{\tilde{\sigma}}_t^2}}\Big)^\alpha
\Big\{\frac{1}{\sqrt{1+\alpha}}-\Big(1+\frac{1}{\alpha}\Big)
\exp\Big(-\frac{\A}{2}\frac{ X_t^2}{{\tilde{\sigma}}_t^2 }\Big)
\Big\}} &,\A > 0
 \\ \\
 {\displaystyle \frac{X_t^2}{\tilde{\sigma}_t^2}
+\log\tilde{\sigma}_t^2}&,\A=0\end{array}\right.
\end{eqnarray*}
and $\{{\tilde{\sigma}}_t^2| 1\leq t\leq n\}$ is given recursively by
\begin{eqnarray}\label{tilde1}
{\tilde{\sigma}}_t^2:={\tilde{\sigma}}_t^2(\T)= \W + \sum_{i=1}^{p}\alpha_i\, X_{t-i}^2+\sum_{j=1}^{q}\beta_j\,
{\tilde{\sigma}}_{t-j}^2.
\end{eqnarray}
Here, the initial values could be any constant values taken to be fixed, neither random nor a function of the parameters. So as to obtain the asymptotic properties of the MDPDE, the following regularity conditions are imposed.

\begin{enumerate}
\item[\bf A1.] The true parameter vector $\theta_0 \in \Theta\,$ and $\Theta$ is compact.
\item[\bf A2.] $\displaystyle \sup_{\T\in\Theta}\sum_{j=1}^{q}\beta_j <1.$
\item[\bf A3.] If $q>0\,$, ${\mathcal{A}}_{\theta_0}(z)$ and
${\mathcal{B}}_{\theta_0}(z)$ have no common root,
${\mathcal{A}}_{\theta_0}(1) \neq 1$, and $\alpha_{0p}+\beta_{0q}
\neq 0$, where ${\mathcal{A}}_{\T}(z)=\sum_{i=1}^p \A_i\,z^i$
and ${\mathcal{B}}_{\T}(z)=1-\sum_{j=1}^q\beta_j\,z^j$.
(Conventionally, ${\mathcal{A}}_{\T}(z)=0$ if $p=0$ and
${\mathcal{B}}_{\T}(z)=1$ if $q=0$.)
\item[\bf A4.] $\theta_0$ is in the interior of $\Theta$.
\end{enumerate}
 The following asymptotics of the MDPDE  are established by Lee and Song (2009).

\begin{Prop}\label{Prop1}
For each $\A \geq 0,$ let $\{ \hat{\T}_{\A,n}\}$ be a sequence of
the MDPDEs satisfying $\eqref{root}$. Suppose that $\epsilon_t$s are i.i.d. random variables from $N(0,1)$.
Then, under the assumptions $\bf A1$-$\bf A3$, $\hat{\T}_{\A,n}$ converges
to $\theta_0$ almost surely. If, in addition, the assumption {\bf A4} holds, then
\[\sqrt{n}\,(\,\hat{\theta}_{\A,n} -\,\theta_0) \stackrel
{d}{\longrightarrow} N\,\Big(\,0\,,\frac{k(\alpha)}{g^2(\alpha)}\,
J_{2,\alpha}^{-1} \,J_{1,\alpha} \,J_{2, \alpha}^{-1}\,\Big)\,,\]
where
\begin{eqnarray*}
k(\alpha)=\frac{{(1+\alpha)}^2(1+2\alpha^2)}{2{(1+2\alpha)}^{2/5}}-\frac{\alpha^2}{4(1+\alpha)}\,,\quad
g(\alpha)=\frac{\alpha^2+2\A+2}{4(1+\alpha)^{3/2}}\,,
\end{eqnarray*}
and
\begin{eqnarray*}
J_{1,\alpha}=\E \left[{\Big(\frac{1}{\sigma_t^2(\theta_0)}\Big)}^{\alpha+2}\,\pa_\T \sigma_t^2(\theta_0) \pa_{\T'}
\sigma_t^2(\theta_0)  \right]\,,&&
J_{2,\alpha}=\E \left[
{\Big(\frac{1}{\sigma_t^2(\theta_0)}\Big)}^{\frac{\alpha}{2}+2}\,\pa_\T
\sigma_t^2(\theta_0) \pa_{\T'}
\sigma_t^2(\theta_0)  \right].
\end{eqnarray*}
\end{Prop}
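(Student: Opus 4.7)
The plan is to follow the standard two-step M-estimator recipe (uniform convergence for consistency, score-Hessian expansion for asymptotic normality), but with an extra layer to absorb the discrepancy between the observable conditional variance $\tilde\sigma_t^2(\theta)$ and its stationary version $\sigma_t^2(\theta)$ driven by the infinite past. Throughout I work with the explicit form of $\tilde l_\A$, whose gradient and Hessian involve $\partial_\T\sigma_t^2$ and $\partial_{\T\T'}^2\sigma_t^2$ times factors of $\sigma_t^{-2}$ and $\exp(-\A X_t^2/(2\sigma_t^2))$; the Gaussianity of $\epsilon_t$ then lets me compute moments of these factors in closed form and match them to $k(\A)$, $g(\A)$, $J_{1,\A}$, $J_{2,\A}$.

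For strong consistency, I would first use assumption \textbf{A2} (and \textbf{A3} via invertibility of $\mathcal{B}_\T$) to show that $\sup_{\T\in\Theta}|\tilde\sigma_t^2(\T)-\sigma_t^2(\T)|$ decays geometrically in $t$, since both satisfy the same recursion and differ only through the initial values; this makes the initialization asymptotically negligible. By the ergodic theorem applied to the stationary process $\{l_\A(X_t;\T)\}$ together with compactness of $\Theta$ (assumption \textbf{A1}) and a dominated continuity argument, one gets $\sup_\T|H_{\A,n}(\T)-\mathbb{E} l_\A(X_t;\T)|\to 0$ a.s., whence the same holds for $\tilde H_{\A,n}$. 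A direct computation using $X_t=\sigma_t(\T_0)\epsilon_t$ with $\epsilon_t\sim N(0,1)$ shows that $\T\mapsto \mathbb{E} l_\A(X_t;\T)$ is uniquely minimized at $\T_0$ (the minimizer of the population DP divergence), which, combined with identifiability via \textbf{A3}, yields $\hat\T_{\A,n}\to\T_0$ a.s.

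For asymptotic normality, I would apply \textbf{A4} and Taylor-expand: $0=\partial_\T\tilde H_{\A,n}(\hat\T_{\A,n})=\partial_\T\tilde H_{\A,n}(\T_0)+\partial_{\T\T'}^2\tilde H_{\A,n}(\T_n^*)(\hat\T_{\A,n}-\T_0)$. The score term $\sqrt n\,\partial_\T\tilde H_{\A,n}(\T_0)$ is, up to the asymptotically negligible approximation error from replacing $\tilde\sigma_t^2$ with $\sigma_t^2$, a sum of martingale differences with respect to $\mathcal F_{t-1}=\sigma(\epsilon_s:s\le t-1)$, since $\partial_\T l_\A(X_t;\T_0)$ has conditional mean zero at $\T_0$. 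A routine martingale CLT (Billingsley) gives $\sqrt n\,\partial_\T\tilde H_{\A,n}(\T_0)\xrightarrow{d} N(0,k(\A)\,J_{1,\A})$, where the scalar $k(\A)$ arises from evaluating $\mathbb{E}[\exp(-\A\epsilon_t^2)\epsilon_t^2\,\cdots]$-type integrals under $N(0,1)$. A uniform law of large numbers, together with the consistency of $\T_n^*$, then shows $\partial_{\T\T'}^2\tilde H_{\A,n}(\T_n^*)\to g(\A)\,J_{2,\A}$ a.s., with $g(\A)$ again emerging from Gaussian moment computations of $\exp(-\A\epsilon_t^2/2)$-weighted polynomials in $\epsilon_t$. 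Slutsky then delivers the stated sandwich variance $k(\A)g(\A)^{-2}J_{2,\A}^{-1}J_{1,\A}J_{2,\A}^{-1}$.

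The main technical obstacle is the joint control of the initialization error and the moment conditions needed to replace $\tilde\sigma_t^2$ by $\sigma_t^2$ inside score and Hessian after multiplying by $\sqrt n$: one must show that $\sqrt n\sup_\T\|\partial_\T\tilde H_{\A,n}(\T)-\partial_\T H_{\A,n}(\T)\|\to 0$ (in probability), which requires combining the geometric decay of $|\tilde\sigma_t^2-\sigma_t^2|$ with finite moments of $X_t^2/\sigma_t^2$ and $\partial_\T\sigma_t^2/\sigma_t^2$ under \textbf{A2}; the robustness-inducing factor $\exp(-\A X_t^2/(2\sigma_t^2))$ actually helps here because it bounds the heavy tails when $\A>0$. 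Verifying the required uniform integrability via Bougerol–Picard-type arguments, and then matching the resulting constants to the closed-form $k(\A)$, $g(\A)$, is where most of the bookkeeping lies; the rest is standard M-estimation machinery.
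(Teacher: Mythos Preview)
Your proposal is correct and follows exactly the approach of Lee and Song (2009), which is the reference the paper cites for this proposition rather than giving its own proof. The paper treats Proposition~\ref{Prop1} as a quoted result (``The following asymptotics of the MDPDE are established by Lee and Song (2009)'') and supplies no argument in the Appendix; the technical ingredients you list---geometric decay of $\sup_\T|\tilde\sigma_t^2-\sigma_t^2|$ from the recursion under {\bf A2}, the moment bounds on $X_t^2/\sigma_t^2$ and $\sigma_t^{-2}\pa_\T\sigma_t^2$, the martingale-difference structure of $\pa_\T l_\A(X_t;\T_0)$, and the Gaussian integrals producing $k(\A)$ and $g(\A)$---are precisely the tools invoked (via citations to Francq--Zako\"{\i}an (2004) and Lee and Song (2009)) in the paper's Lemmas~\ref{Lm.G2}--\ref{Lm.G4} when proving the related Theorem~\ref{Thm3}.
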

\begin{remark}\label{Rm.l}
Using (\ref{pa.l}) in Appedix below, one can see that
\[k(\A)J_{1,\A}=\E\big[ \pa_\T l_\A(X_t;\T_0) \pa_{\T'} l_\A(X_t;\T_0)\big]\quad\mbox{and}\quad
g(\A)J_{2,\A}=\E \big[ \paa l_\A(X_t;\T_0)\big],\]
where $l_\A(X_t;\T)$ is the counterpart of $\tilde l_\A(X_t;\T)$ obtained by replacing $\tilde \sigma_t^2$ with  $\sigma_t^2$ in (\ref{root}). The non-singularity of $J_{2,\A}$ is shown in page 337 in Lee and Song (2009), and one can also show the invertibility of $J_{1,\A}$  in a similar fashion.
\end{remark}
Now, we construct DPD based test for the following hypotheses:
\begin{eqnarray*}
H_0: \theta_0\textrm{ does not change over~} X_1,\ldots,X_n\qquad v.s.\qquad H_1:\textrm{not~} H_0.
\end{eqnarray*}
Using Taylor's theorem with the same arguments used to obtain (\ref{paH_final}), we can have that  for each $ s \in [0,1]$,
\begin{eqnarray}\label{paH_final2}
\frac{[ns]}{\sqrt{n}}\pa_{\theta}\tilde{H}_{\A,[ns]}(\HT)
&=&\frac{[ns]}{\sqrt{n}}\pa_{\theta}\tilde{H}_{\A,[ns]}(\T_0)+\frac{[ns]}{n}\paa\tilde{H}_{\A,[ns]}(\T^*_{\A,n,s})
J_\A^{-1} \sqrt{n}\,\pa_{\T}\tilde{H}_{\A,n}(\T_0)\nonumber\\
&&+\frac{[ns]}{n}\paa \tilde{H}_{\A,[ns]}(\T^*_{\A,n,s})
J_\A^{-1}(B_{\A,n}+J_\A)\sqrt{n}(\HT-\T_0),
\end{eqnarray}
where $\T^*_{\A,n,s}$ is an intermediate point between $\theta_0$ and $\HT$, $B_{\A,n}=\paa\tilde{H}_{\A,n}(\T^*_{\A,n,1})$, and $J_\A=-g(\A)J_{2,\A}$.
In Appendix below, we show that the last term in the RHS of the above equation is asymptotically negligible and the first two term converges weakly to Brownian bridge, where Lemmas \ref{Lm.G1} and \ref{Lm.G4} play a critical role. The following theorem states that the similar result to Theorem \ref{Thm1} is also obtained in GARCH models.
%####################################  Theorem 4  #########################################################
\begin{theorem}\label{Thm3}
Suppose that the assumptions {\bf A1}- {\bf A4} hold. Then, under $H_0$, we
have
\begin{eqnarray*}
\frac{1}{\sqrt{k(\A)}}J_{1,\A}^{-1/2}\frac{[ns]}{\sqrt{n}}\pa_{\theta}\tilde{H}_{\A,[ns]}(\HT)\stackrel{w}{\longrightarrow}\,B^o_D(s)\quad
\rm{in}\ \ \mathbb{D}\,\big( [0,1],\, \mathbb{R}^D\big)\,,
\end{eqnarray*}
where $D=p+q+1$ and $\{B^o_D(s)\}$ is a $D$-dimensional standard Brownian bridge,
and thus,
\begin{eqnarray*}
\tilde T_n^\alpha:=\frac{1}{k(\A)}\max_{1\leq k \leq n}
\frac{k^2}{n}\pa_{\T'}\tilde{H}_{\A,k}(\HT) J_{1,\A}^{-1}\,
\pa_{\theta}\tilde{H}_{\A,k}(\HT)\stackrel{d}{\longrightarrow}
\sup_{0\leq s \leq 1} \big\|B_D^o(s)\big\|_2^2.
\end{eqnarray*}
\end{theorem}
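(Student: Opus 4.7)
The plan is to mirror the three-term decomposition (\ref{paH_final2}) exactly as Theorems \ref{Thm1}--\ref{Thm2} were handled in the i.i.d.\ case, with two GARCH-specific modifications: the truncation $\sigma_t^2\to\tilde\sigma_t^2$ and the use of a stationary-ergodic martingale invariance principle in place of the ordinary CLT. The three tasks are: (i) obtain the weak limit of the leading partial-sum $\frac{[ns]}{\sqrt n}\pa_\T \tilde H_{\A,[ns]}(\T_0)$; (ii) prove uniform-in-$s$ convergence of the Hessian factor $\frac{[ns]}{n}\paa\tilde H_{\A,[ns]}(\T^*_{\A,n,s})$; and (iii) show that the third term on the right-hand side of (\ref{paH_final2}) is $o_P(1)$ uniformly in $s$.

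For (i), Remark \ref{Rm.l} identifies $\{\pa_\T l_\A(X_t;\T_0)\}$ (built from the stationary $\sigma_t^2$) as a square-integrable ergodic martingale-difference sequence with one-step covariance $k(\A)J_{1,\A}$, so the invariance principle for such sequences yields
\[ \frac{1}{\sqrt n}\sum_{t=1}^{[ns]}\pa_\T l_\A(X_t;\T_0)\;\stackrel{w}{\longrightarrow}\;\sqrt{k(\A)}\,J_{1,\A}^{1/2}\,B_D(s)\quad\rm{in}\ \mathbb{D}([0,1],\mathbb{R}^D),\]
with $B_D$ a standard $D$-dimensional Brownian motion. Lemma \ref{Lm.G1} then allows passage from $\sigma_t^2$ to $\tilde\sigma_t^2$: the geometric decay $|\tilde\sigma_t^2-\sigma_t^2|\le C\rho^t$ guaranteed by $\sup_\T\sum_j\B_j<1$ controls the cumulative truncation error so that $\sup_{0\le s\le1}\bigl\|\frac{1}{\sqrt n}\sum_{t=1}^{[ns]}\{\pa_\T\tilde l_\A(X_t;\T_0)-\pa_\T l_\A(X_t;\T_0)\}\bigr\|=o_P(1)$.

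For (ii), combining the moment and uniform-bound machinery of Lemma \ref{Lm.G4} with the strong consistency of $\HT$ from Proposition \ref{Prop1} via a standard stochastic-equicontinuity argument produces
\[ \sup_{s\in[0,1]}\Big\|\tfrac{[ns]}{n}\paa\tilde H_{\A,[ns]}(\T^*_{\A,n,s})-s\,g(\A)J_{2,\A}\Big\|=o_P(1);\]
specializing at $s=1$ gives $B_{\A,n}+J_\A=o_P(1)$, which together with $\sqrt n(\HT-\T_0)=O_P(1)$ from Proposition \ref{Prop1} accomplishes (iii). Substituting all three into (\ref{paH_final2}), the surviving terms collapse to
\[ \sqrt{k(\A)}\,J_{1,\A}^{1/2}\bigl(B_D(s)-s\,B_D(1)\bigr)\;=\;\sqrt{k(\A)}\,J_{1,\A}^{1/2}\,B_D^o(s),\]
since $g(\A)J_{2,\A}\cdot J_\A^{-1}=-I$ extracts precisely an $s$-weighted copy of the terminal value from the second term. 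Premultiplying by $k(\A)^{-1/2}J_{1,\A}^{-1/2}$ gives the first display of the theorem, and the continuous mapping theorem applied to $x(\cdot)\mapsto\sup_{s\in[0,1]}\|x(s)\|_2^2$ gives the second.

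The hard part will be the \emph{uniform-in-$s$} control of the truncation errors $\tilde\sigma_t^2-\sigma_t^2$ inside both the score and the Hessian partial-sum processes, together with the simultaneous handling of the intermediate point $\T^*_{\A,n,s}$ over all $s\in[0,1]$. The pointwise truncation is only $O(\rho^t)$ with finite cumulative sum, but converting this into the $o_P(1)$ rates required for cumulative-sum functionals and Hessian averages is exactly what the technical Lemmas \ref{Lm.G1}--\ref{Lm.G4} are engineered to deliver; once they are in hand, the rest of the argument is the functional-CLT plus continuous-mapping routine sketched above.
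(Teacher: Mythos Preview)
Your proposal is correct and follows essentially the same route as the paper: the FCLT for $\frac{[ns]}{\sqrt n}\pa_\T\tilde H_{\A,[ns]}(\T_0)$ via the martingale invariance principle plus truncation (Lemma~\ref{Lm.G1}), the uniform Hessian control via Lemma~\ref{Lm.G4}, and the $O_P(1)$ bound on $\sqrt n(\HT-\T_0)$ from Proposition~\ref{Prop1} to kill the third term in (\ref{paH_final2}). The only cosmetic difference is that the paper phrases the Hessian convergence as $\max_k\frac{k}{n}\|\paa\tilde H_{\A,k}(\T^*_{\A,n,k})+J_\A\|=o_P(1)$ and works with $\tilde I^o_{\A,n}(s)$ directly rather than substituting the limit $s\,g(\A)J_{2,\A}$, but these formulations are equivalent.
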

\noindent  Recalling that $k(\A)J_{1,\A}=\E\big[\pa_\T l_\A(X;\T_0)\pa_{\T'} l_\A(X;\T_0)\big] $, one can estimate  $J_{1,\A}$ as follows:
\[ \hat J_{1,\A}=\frac{1}{k(\A)n} \sum_{t=1}^n \pa_\T \tilde l_\A(X_t;\hat\T_{\A,n})\,\pa_{\T'} \tilde l_\A(X_t;\hat\T_{\A,n}).\]
The consistency of  $\hat J_{1,\A}$ is proved in Lemma \ref{Lm.G6}.
\begin{remark}
Berkes et al. (2004) proposed a score test for parameter change in GARCH models. Although their test is constructed using the quasi-MLE of Berkes and Horv\'{a}th (2004), the test is essentially equal to $\tilde T_n^\A$ with $\A=0$.
\end{remark}
\section{Simulation results}
In the present section, we evaluate the finite sample performance of the proposed test and compare with the score test. All empirical sizes and powers in this section are calculated  at 5\% significance level  based on 2,000 repetitions. The corresponding critical values are obtained via Monte Carlo simulations.
\begin{table}[!b]
  \caption{\noindent \small Empirical sizes and powers of $T_n^0$ and $T_n^\alpha$ with and without outliers.}\label{tab:iid1}
   \centering
  \tabcolsep=2.3pt
 {\footnotesize
       \begin{tabular}{lcrrrrrrclccccccc}
\cmidrule{1-8}\cmidrule{10-17}    \multirow{2}[4]{*}{No outliers} &       &       & \multicolumn{5}{c}{$T_n^\alpha$}      &       & \multirow{2}[4]{*}{$p=1\%, \delta=10$} &       &       & \multicolumn{5}{c}{$T_n^\alpha$} \\
\cmidrule{4-8}\cmidrule{13-17}          & $n$   & \multicolumn{1}{l}{$T_n^0$} & \multicolumn{1}{c}{} & \multicolumn{1}{c}{0.1} & \multicolumn{1}{c}{0.2} & \multicolumn{1}{c}{0.3} & \multicolumn{1}{c}{0.5} &       &       & $n$   & \multicolumn{1}{l}{$T_n^0$} &       & 0.1   & 0.2   & 0.3   & 0.5 \\
\cmidrule{1-8}\cmidrule{10-17}    Sizes & 500   & 0.034 &       & 0.040 & 0.044 & 0.042 & 0.043 &       & Sizes & 500   & 0.026 &       & 0.047 & 0.046 & 0.048 & 0.046 \\
    $(\mu,\sigma^2)=(0,1)$ & 1000  & 0.041 &       & 0.046 & 0.047 & 0.048 & 0.052 &       & $(\mu,\sigma^2)=(0,1)$ & 1000  & 0.030 &       & 0.043 & 0.042 & 0.046 & 0.044 \\
\cmidrule{1-8}\cmidrule{10-17}    $\mu:0\rightarrow0.15$ & 500   & 0.221 &       & 0.222 & 0.218 & 0.214 & 0.198 &       & $\mu:0\rightarrow0.15$ & 500   & 0.118 &       & 0.216 & 0.207 & 0.201 & 0.186 \\
          & 1000  & 0.469 &       & 0.466 & 0.460 & 0.436 & 0.406 &       &       & 1000  & 0.250 &       & 0.464 & 0.452 & 0.436 & 0.399 \\
\cmidrule{1-8}\cmidrule{10-17}    $\mu:0\rightarrow0.3\ $ & 500   & 0.764 &       & 0.758 & 0.752 & 0.735 & 0.696 &       & $\mu:0\rightarrow0.3\ $ & 500   & 0.528 &       & 0.754 & 0.740 & 0.722 & 0.680 \\
          & 1000  & 0.984 &       & 0.980 & 0.976 & 0.972 & 0.956 &       &       & 1000  & 0.834 &       & 0.978 & 0.978 & 0.972 & 0.955 \\
\cmidrule{1-8}\cmidrule{10-17}    $\sigma^2:1\rightarrow 1.25$ & 500   & 0.247 &       & 0.246 & 0.232 & 0.216 & 0.193 &       & $\sigma^2:1\rightarrow 1.25$ & 500   & 0.026 &       & 0.230 & 0.230 & 0.222 & 0.196 \\
          & 1000  & 0.503 &       & 0.503 & 0.480 & 0.452 & 0.387 &       &       & 1000  & 0.038 &       & 0.466 & 0.450 & 0.420 & 0.376 \\
\cmidrule{1-8}\cmidrule{10-17}    $\sigma^2:1\rightarrow 1.5$ & 500   & 0.704 &       & 0.702 & 0.670 & 0.638 & 0.567 &       & $\sigma^2:1\rightarrow 1.5$ & 500   & 0.040 &       & 0.698 & 0.688 & 0.654 & 0.574 \\
          & 1000  & 0.968 &       & 0.966 & 0.958 & 0.946 & 0.900 &       &       & 1000  & 0.068 &       & 0.958 & 0.950 & 0.935 & 0.883 \\
\cmidrule{1-8}\cmidrule{10-17}    \end{tabular}}
\end{table}%

We first consider i.i.d. cases to see the behaviors of the tests in the presence of outliers.  For this, we generate contaminated samples $\{X_t\}$ by using the following scheme: $X_t=X_{t,o} + \delta\ p_t\cdot  sign(X_{t,o})$, where  $\{X_{t,o}\}$ is a sequence of i.i.d. random variables from $N(\mu,\sigma^2)$, $\delta$ is a positive constant, and $p_t$s are i.i.d. Bernoulli random variables with success probability $p$. $\{X_{t,o}\}$ and $\{p_t\}$ are assumed to be independent. This setting describes the situation that the original data $\{X_{t,o}\}$ is contaminated by outlier process $\{\delta p_t\}$. Uncontaminated samples are obtained with $p=0$ or $\delta=0$. $(\mu,\sigma^2)=(0,1)$ is considered to evaluate empirical sizes, and we change the parameter $(\mu,\sigma^2)$ at  midpoint $t=n/2$ for empirical powers.
 The empirical sizes and powers are presented in Table \ref{tab:iid1}, where the left sub-table shows the results for uncontaminated case and the right for contaminated case with $p=1\%$ and $\delta=10$. In the left sub-table, one can see that  $T_n^\alpha$ yields proper sizes and reasonable powers in all $\alpha$ considered, and the score test $T_n^0$ shows best performance as expected. It is noteworthy that the power tends to decrease as $\alpha$ increases and that $T_n^\alpha$ with $\alpha$ close to 0 shows similar performance to $T_n^0$. %This is mainly due to the fact that the efficiency of the MDPDE decreases with an increase in $\alpha$.
 In the right sub-table, we can observe the power losses of $T_n^0$. In particular, $T_n^0$ is severely compromised in testing for the change in $\sigma^2$, that is, variance change. In contrast, $T_n^{\alpha}$ produces empirical powers similar to the powers obtained in the left sub-table, i.e., uncontaminated cases. This indicates that $T_n^{\alpha}$ is less affected by outliers.
 Such power losses of the score test and the robustness of the proposed test are clearly shown in Table \ref{tab:iid2}, which present the results for more contaminated case. In all contaminated cases, size distortions are not observed.

\begin{table}[h]
\centering
\caption{ \small Empirical sizes and powers of  $T_n^0$ and $T_n^\alpha$ under more severe contamination.}\label{tab:iid2}
  \tabcolsep=2.3pt
   \renewcommand{\arraystretch}{1}
    {\footnotesize
       \begin{tabular}{lcccccccclccccccc}
\cmidrule{1-8}\cmidrule{10-17}    \multirow{2}[4]{*}{$p=1\%, \delta=15$} &       &       & \multicolumn{5}{c}{$T_n^\alpha$}      &       & \multirow{2}[4]{*}{$p=3\%, \delta=10$} &       &       & \multicolumn{5}{c}{$T_n^\alpha$} \\
\cmidrule{4-8}\cmidrule{13-17}          & $n$   & \multicolumn{1}{l}{$T_n^0$} &       & 0.1   & 0.2   & 0.3   & 0.5   &       &       & $n$   & \multicolumn{1}{l}{$T_n^0$} &       & 0.1   & 0.2   & 0.3   & 0.5 \\
\cmidrule{1-8}\cmidrule{10-17}    Sizes & 500   & 0.018 &       & 0.040 & 0.042 & 0.044 & 0.046 &       & Sizes & 500   & 0.034 &       & 0.049 & 0.050 & 0.050 & 0.046 \\
    $(\mu,\sigma^2)=(0,1)$ & 1000  & 0.024 &       & 0.045 & 0.046 & 0.046 & 0.047 &       & $(\mu,\sigma^2)=(0,1)$ & 1000  & 0.034 &       & 0.055 & 0.051 & 0.049 & 0.046 \\
\cmidrule{1-8}\cmidrule{10-17}    $\mu:0\rightarrow0.15$ & 500   & 0.100 &       & 0.237 & 0.230 & 0.227 & 0.215 &       & $\mu:0\rightarrow0.15$ & 500   & 0.095 &       & 0.226 & 0.222 & 0.216 & 0.204 \\
          & 1000  & 0.172 &       & 0.440 & 0.429 & 0.418 & 0.370 &       &       & 1000  & 0.160 &       & 0.439 & 0.420 & 0.406 & 0.384 \\
\cmidrule{1-8}\cmidrule{10-17}    $\mu:0\rightarrow0.3\ $ & 500   & 0.408 &       & 0.762 & 0.750 & 0.730 & 0.692 &       & $\mu:0\rightarrow0.3\ $ & 500   & 0.308 &       & 0.771 & 0.756 & 0.737 & 0.698 \\
          & 1000  & 0.641 &       & 0.974 & 0.972 & 0.966 & 0.954 &       &       & 1000  & 0.612 &       & 0.976 & 0.972 & 0.964 & 0.946 \\
\cmidrule{1-8}\cmidrule{10-17}    $\sigma^2:1\rightarrow 1.25$ & 500   & 0.023 &       & 0.242 & 0.232 & 0.224 & 0.196 &       & $\sigma^2:1\rightarrow 1.25$ & 500   & 0.036 &       & 0.230 & 0.222 & 0.208 & 0.188 \\
          & 1000  & 0.038 &       & 0.492 & 0.460 & 0.432 & 0.368 &       &       & 1000  & 0.040 &       & 0.466 & 0.460 & 0.424 & 0.370 \\
\cmidrule{1-8}\cmidrule{10-17}    $\sigma^2:1\rightarrow 1.5$ & 500   & 0.026 &       & 0.687 & 0.670 & 0.632 & 0.563 &       & $\sigma^2:1\rightarrow 1.5$ & 500   & 0.034 &       & 0.687 & 0.692 & 0.650 & 0.574 \\
          & 1000  & 0.040 &       & 0.962 & 0.950 & 0.940 & 0.894 &       &       & 1000  & 0.049 &       & 0.948 & 0.952 & 0.936 & 0.890 \\
\cmidrule{1-8}\cmidrule{10-17}    \end{tabular}}%
\end{table}%
Next, we examine the performance of $\tilde T_n^\A$ in the following GARCH(1,1) model:
\begin{eqnarray*}
\begin{split}
X_t &= \sigma_t\,\epsilon_t,\\
\sigma_t^2&= w+\A_1 X_{t-1}^2+\B_1\sigma^2_{t-1},
\end{split}
\end{eqnarray*}
where $\{\epsilon_t \}$ is a sequence of i.i.d. random variables from $N(0,1)$.
  For empirical sizes, we generate samples with $(w,\alpha_1,\beta_1)=(0.5,0.2,0.4)$ and $(0.5,0.15,0.8)$. The latter parameter value is employed to see the performance in more volatile situation. %For empirical powers, we change only single parameter value at the midpoint in order to assess the sensitivity of the tests with respect to each parameter.
  Two types of outliers, innovation outliers (IO) and  additive outliers (AO), are considered. We generate
samples with IO by replacing $\epsilon_t$ in the GARCH model above with contaminated error $\tilde \epsilon_t=\epsilon_t+ |Z_{t,c}|p_t \cdot sign(\epsilon_t)$, where $\{Z_{t,c}\}$ and $\{p_t\}$ are sequences of i.i.d. random variables from  $N(0,\sigma_c^2)$ and Bernoulli distribution with parameter $p$, respectively. It is assumed that $\{\epsilon_t\}$, $\{Z_{t,c}\}$, and $\{p_t\}$ are all independent. Samples contaminated by AO are obtained by the following model: $X_t=X_{t,o} + |Z_{t,c}|p_t\cdot sign(X_{t,o})$, where $\{X_{t,o}\}$ is the uncontaminated sample from the GARCH(1,1) model above.
 Simulation results for the cases of $(w,\alpha_1,\beta_1)=(0.5,0.2,0.4)$ and  $(0.5,0.15,0.8)$ are provided in the left and right sub-tables in Tables \ref{tab:GARCH} - \ref{GARCH:AO2}, respectively.

\begin{table}
\vspace{-0.8cm}
\centering
  \noindent\caption{\small Empirical sizes and powers of $\tilde T_n^0$ and $\tilde T_n^\alpha$ in GARCH (1,1) models without outliers.}\label{tab:GARCH}%
  \tabcolsep=2.3pt
  {\footnotesize
   \begin{tabular}{lcccccccclccccccc}
\cmidrule{1-8}\cmidrule{10-17}      &       &       & \multicolumn{5}{c}{$\tilde T_n^\alpha$}      &       &       &       &       & \multicolumn{5}{c}{$\tilde T_n^\alpha$} \\
\cmidrule{4-8}\cmidrule{13-17}      & $n$   & $\tilde T^0_n$ &       & 0.1   & 0.2   & 0.3   & 0.5   &       &       & $n$   & \multicolumn{1}{l}{$\tilde T^0_n$} &       & 0.1   & 0.2   & 0.3   & 0.5 \\
\cmidrule{1-8}\cmidrule{10-17}Sizes & 500   & 0.032 &       & 0.034 & 0.038 & 0.036 & 0.035 &       & Sizes & 500   & 0.051 &       & 0.050 & 0.054 & 0.055 & 0.052 \\
$(w, \alpha_1, \beta_1)$ & 1000  & 0.032 &       & 0.036 & 0.036 & 0.040 & 0.042 &       & $(w, \alpha_1, \beta_1)$ & 1000  & 0.040 &       & 0.038 & 0.034 & 0.037 & 0.036 \\
=(0.5, 0.2, 0.4) & 1500  & 0.038 &       & 0.038 & 0.040 & 0.043 & 0.040 &       & =(0.5, 0.15, 0.8) & 1500  & 0.044 &       & 0.043 & 0.046 & 0.044 & 0.048 \\
\cmidrule{1-8}\cmidrule{10-17}$w:0.5\rightarrow0.8$ & 500   & 0.296 &       & 0.275 & 0.254 & 0.235 & 0.184 &       & $w:0.5\rightarrow0.2$ & 500   & 0.182 &       & 0.138 & 0.104 & 0.080 & 0.070 \\
      & 1000  & 0.774 &       & 0.772 & 0.734 & 0.678 & 0.566 &       &       & 1000  & 0.378 &       & 0.318 & 0.246 & 0.186 & 0.138 \\
      & 1500  & 0.960 &       & 0.958 & 0.944 & 0.920 & 0.848 &       &       & 2000  & 0.848 &       & 0.812 & 0.743 & 0.642 & 0.466 \\
\cmidrule{1-8}\cmidrule{10-17}$\alpha_1:0.2\rightarrow0.5$ & 500   & 0.338 &       & 0.390 & 0.404 & 0.401 & 0.352 &       & $\alpha_1:0.15\rightarrow0.05$ & 500   & 0.264 &       & 0.274 & 0.274 & 0.263 & 0.244 \\
      & 1000  & 0.890 &       & 0.908 & 0.896 & 0.876 & 0.812 &       &       & 1000  & 0.729 &       & 0.760 & 0.734 & 0.694 & 0.588 \\
      & 1500  & 0.992 &       & 0.992 & 0.991 & 0.987 & 0.966 &       &       & 1500  & 0.960 &       & 0.964 & 0.949 & 0.928 & 0.852 \\
\cmidrule{1-8}\cmidrule{10-17}$\beta_1:0.4\rightarrow0.6$ & 500   & 0.337 &       & 0.330 & 0.300 & 0.268 & 0.219 &       & $\beta_1:0.8\rightarrow0.7$ & 500   & 0.222 &       & 0.204 & 0.176 & 0.152 & 0.135 \\
      & 1000  & 0.866 &       & 0.868 & 0.840 & 0.800 & 0.675 &       &       & 1000  & 0.590 &       & 0.588 & 0.554 & 0.501 & 0.412 \\
      & 1500  & 0.991 &       & 0.988 & 0.980 & 0.970 & 0.916 &       &       & 1500  & 0.892 &       & 0.898 & 0.880 & 0.832 & 0.722 \\
\cmidrule{1-8}\cmidrule{10-17}\end{tabular}}
%\end{table}
%
%\begin{table}[!h]
  \caption{\small Empirical sizes and powers of $\tilde T_n^0$ and $\tilde T_n^\alpha$ in the case of IO contamination with $p=1\%$ and $\sigma_v^2=10$.} \label{GARCH:IO1}%
   \centering
  \tabcolsep=2.3pt
   {\footnotesize
\begin{tabular}{lcccccccrlccccccc}
\cmidrule{1-8}\cmidrule{10-17}      &       &       & \multicolumn{5}{c}{$\tilde T_n^\alpha$}      &       &       &       &       & \multicolumn{5}{c}{$\tilde T^\A_n$} \\
\cmidrule{4-8}\cmidrule{13-17}      & $n$   & $\tilde T^0_n$ &       & 0.1   & 0.2   & 0.3   & 0.5   &       &       & $n$   & $\tilde T^0_n$ &       & 0.1   & 0.2   & 0.3   & 0.5 \\
\cmidrule{1-8}\cmidrule{10-17}Sizes & 500   & 0.048 &       & 0.040 & 0.044 & 0.046 & 0.046 &       & Sizes & 500   & 0.090 &       & 0.047 & 0.043 & 0.043 & 0.042 \\
$(w, \alpha_1, \beta_1)$ & 1000  & 0.028 &       & 0.038 & 0.044 & 0.046 & 0.044 &       & $(w, \alpha_1, \beta_1)$ & 1000  & 0.054 &       & 0.047 & 0.046 & 0.046 & 0.045 \\
=(0.5, 0.2, 0.4) & 1500  & 0.028 &       & 0.042 & 0.047 & 0.049 & 0.046 &       & =(0.5, 0.15, 0.8) & 1500  & 0.041 &       & 0.052 & 0.045 & 0.045 & 0.046 \\
\cmidrule{1-8}\cmidrule{10-17}$w:0.5\rightarrow0.8$ & 500   & 0.156 &       & 0.204 & 0.267 & 0.271 & 0.230 &       & $w:0.5\rightarrow0.2$ & 500   & 0.144 &       & 0.109 & 0.086 & 0.078 & 0.068 \\
      & 1000  & 0.254 &       & 0.576 & 0.658 & 0.658 & 0.576 &       &       & 1000  & 0.230 &       & 0.260 & 0.224 & 0.195 & 0.146 \\
      & 1500  & 0.322 &       & 0.786 & 0.853 & 0.852 & 0.804 &       &       & 2000  & 0.422 &       & 0.700 & 0.664 & 0.581 & 0.424 \\
\cmidrule{1-8}\cmidrule{10-17}$\alpha_1:0.2\rightarrow0.5$ & 500   & 0.214 &       & 0.378 & 0.430 & 0.446 & 0.405 &       & $\alpha_1:0.15\rightarrow0.05$ & 500   & 0.218 &       & 0.331 & 0.380 & 0.368 & 0.348 \\
      & 1000  & 0.520 &       & 0.878 & 0.906 & 0.895 & 0.848 &       &       & 1000  & 0.394 &       & 0.806 & 0.830 & 0.816 & 0.758 \\
      & 1500  & 0.716 &       & 0.987 & 0.992 & 0.990 & 0.971 &       &       & 1500  & 0.582 &       & 0.976 & 0.980 & 0.970 & 0.943 \\
\cmidrule{1-8}\cmidrule{10-17}$\beta_1:0.4\rightarrow0.6$ & 500   & 0.216 &       & 0.317 & 0.348 & 0.317 & 0.266 &       & $\beta_1:0.8\rightarrow0.7$ & 500   & 0.184 &       & 0.218 & 0.238 & 0.231 & 0.204 \\
      & 1000  & 0.509 &       & 0.856 & 0.870 & 0.842 & 0.752 &       &       & 1000  & 0.336 &       & 0.630 & 0.666 & 0.638 & 0.553 \\
      & 1500  & 0.719 &       & 0.979 & 0.985 & 0.980 & 0.949 &       &       & 1500  & 0.513 &       & 0.910 & 0.906 & 0.885 & 0.816 \\
\cmidrule{1-8}\cmidrule{10-17}\end{tabular}}
%\end{table}%
%
%\begin{table}[!h]
\centering
  \noindent \caption{\small Empirical sizes and powers of $\tilde T_n^0$ and $\tilde T_n^\alpha$ in the case of IO contamination with $p=3\%$ and $\sigma_v^2=10$.} \label{GARCH:IO2}%
  \tabcolsep=2.3pt
 {\footnotesize
\begin{tabular}{lcccccccrlccccccc}
\cmidrule{1-8}\cmidrule{10-17}      &       &       & \multicolumn{5}{c}{$\tilde T^\A_n$}      &       &       &       &       & \multicolumn{5}{c}{$\tilde T^\A_n$} \\
\cmidrule{4-8}\cmidrule{13-17}      & $n$   & $\tilde T^0_n$ &       & 0.1   & 0.2   & 0.3   & 0.5   &       &       & $n$   & $\tilde T^0_n$ &       & 0.1   & 0.2   & 0.3   & 0.5 \\
\cmidrule{1-8}\cmidrule{10-17}Sizes & 500   & 0.053 &       & 0.028 & 0.030 & 0.037 & 0.038 &       & Sizes & 500   & 0.227 &       & 0.072 & 0.058 & 0.051 & 0.054 \\
$(w, \alpha_1, \beta_1)$ & 1000  & 0.044 &       & 0.040 & 0.046 & 0.044 & 0.050 &       & $(w, \alpha_1, \beta_1)$ & 1000  & 0.222 &       & 0.050 & 0.043 & 0.038 & 0.038 \\
=(0.5, 0.2, 0.4) & 1500  & 0.038 &       & 0.044 & 0.046 & 0.044 & 0.040 &       & =(0.5, 0.15, 0.8) & 1500  & 0.218 &       & 0.044 & 0.042 & 0.044 & 0.038 \\
\cmidrule{1-8}\cmidrule{10-17}$w:0.5\rightarrow0.8$ & 500   & 0.163 &       & 0.235 & 0.288 & 0.293 & 0.252 &       & $w:0.5\rightarrow0.2$ & 500   & 0.210 &       & 0.070 & 0.062 & 0.056 & 0.048 \\
      & 1000  & 0.227 &       & 0.549 & 0.646 & 0.654 & 0.583 &       &       & 1000  & 0.284 &       & 0.135 & 0.140 & 0.136 & 0.110 \\
      & 1500  & 0.324 &       & 0.786 & 0.856 & 0.846 & 0.802 &       &       & 2000  & 0.467 &       & 0.316 & 0.375 & 0.348 & 0.260 \\
\cmidrule{1-8}\cmidrule{10-17}$\alpha_1:0.2\rightarrow0.5$ & 500   & 0.144 &       & 0.322 & 0.454 & 0.498 & 0.492 &       & $\alpha_1:0.15\rightarrow0.05$ & 500   & 0.179 &       & 0.348 & 0.518 & 0.543 & 0.540 \\
      & 1000  & 0.258 &       & 0.820 & 0.912 & 0.914 & 0.890 &       &       & 1000  & 0.300 &       & 0.844 & 0.918 & 0.929 & 0.906 \\
      & 1500  & 0.409 &       & 0.968 & 0.991 & 0.992 & 0.986 &       &       & 1500  & 0.466 &       & 0.984 & 0.998 & 0.997 & 0.989 \\
\cmidrule{1-8}\cmidrule{10-17}$\beta_1:0.4\rightarrow0.6$ & 500   & 0.134 &       & 0.282 & 0.370 & 0.374 & 0.337 &       & $\beta_1:0.8\rightarrow0.7$ & 500   & 0.234 &       & 0.220 & 0.289 & 0.307 & 0.283 \\
      & 1000  & 0.242 &       & 0.784 & 0.875 & 0.874 & 0.815 &       &       & 1000  & 0.349 &       & 0.574 & 0.706 & 0.720 & 0.684 \\
      & 1500  & 0.432 &       & 0.960 & 0.984 & 0.984 & 0.962 &       &       & 1500  & 0.480 &       & 0.853 & 0.928 & 0.922 & 0.889 \\
\cmidrule{1-8}\cmidrule{10-17}\end{tabular}}
\end{table}%

\begin{table}[H]
\centering
  \caption{\small Empirical sizes and powers of $\tilde T_n^0$ and $\tilde T_n^\alpha$  in the case of AO contamination with $p=1\%$ and $\sigma_v^2=10$.}\label{GARCH:AO1}
  \tabcolsep=2.3pt
   {\footnotesize
\begin{tabular}{lcccccccrlccccccc}
\cmidrule{1-8}\cmidrule{10-17}      &       &       & \multicolumn{5}{c}{$\tilde T^\A_n$}      &       &       &       &       & \multicolumn{5}{c}{$\tilde T^\A_n$} \\
\cmidrule{4-8}\cmidrule{13-17}      & $n$   & $\tilde T^0_n$ &       & 0.1   & 0.2   & 0.3   & 0.5   &       &       & $n$   & $\tilde T^0_n$ &       & 0.1   & 0.2   & 0.3   & 0.5 \\
\cmidrule{1-8}\cmidrule{10-17}Sizes & 500   & 0.068 &       & 0.054 & 0.058 & 0.066 & 0.064 &       & Sizes & 500   & 0.058 &       & 0.057 & 0.060 & 0.058 & 0.058 \\
$(w, \alpha_1, \beta_1)$ & 1000  & 0.051 &       & 0.053 & 0.056 & 0.056 & 0.060 &       & $(w, \alpha_1, \beta_1)$ & 1000  & 0.046 &       & 0.046 & 0.049 & 0.048 & 0.047 \\
=(0.5, 0.2, 0.4) & 1500  & 0.052 &       & 0.055 & 0.066 & 0.066 & 0.068 &       & =(0.5, 0.15, 0.8) & 1500  & 0.052 &       & 0.054 & 0.052 & 0.054 & 0.052 \\
\cmidrule{1-8}\cmidrule{10-17}$w:0.5\rightarrow0.8$ & 500   & 0.232 &       & 0.289 & 0.315 & 0.307 & 0.252 &       & $w:0.5\rightarrow0.2$ & 500   & 0.159 &       & 0.140 & 0.122 & 0.112 & 0.098 \\
      & 1000  & 0.472 &       & 0.736 & 0.756 & 0.724 & 0.618 &       &       & 1000  & 0.320 &       & 0.310 & 0.271 & 0.239 & 0.198 \\
      & 1500  & 0.678 &       & 0.921 & 0.935 & 0.922 & 0.870 &       &       & 2000  & 0.780 &       & 0.840 & 0.786 & 0.701 & 0.562 \\
\cmidrule{1-8}\cmidrule{10-17}$\alpha_1:0.2\rightarrow0.5$ & 500   & 0.258 &       & 0.384 & 0.444 & 0.462 & 0.426 &       & $\alpha_1:0.15\rightarrow0.05$ & 500   & 0.257 &       & 0.299 & 0.316 & 0.311 & 0.277 \\
      & 1000  & 0.702 &       & 0.876 & 0.896 & 0.884 & 0.832 &       &       & 1000  & 0.691 &       & 0.766 & 0.760 & 0.735 & 0.662 \\
      & 1500  & 0.898 &       & 0.984 & 0.986 & 0.982 & 0.967 &       &       & 1500  & 0.934 &       & 0.960 & 0.954 & 0.936 & 0.891 \\
\cmidrule{1-8}\cmidrule{10-17}$\beta_1:0.4\rightarrow0.6$ & 500   & 0.246 &       & 0.348 & 0.377 & 0.370 & 0.312 &       & $\beta_1:0.8\rightarrow0.7$ & 500   & 0.202 &       & 0.205 & 0.206 & 0.196 & 0.180 \\
      & 1000  & 0.606 &       & 0.874 & 0.884 & 0.854 & 0.766 &       &       & 1000  & 0.547 &       & 0.620 & 0.606 & 0.580 & 0.497 \\
      & 1500  & 0.824 &       & 0.985 & 0.982 & 0.976 & 0.941 &       &       & 1500  & 0.851 &       & 0.908 & 0.892 & 0.862 & 0.768 \\
\cmidrule{1-8}\cmidrule{10-17}\end{tabular}}
%\end{table}%
%\begin{table}[!h]
\centering
  \caption{\small Empirical sizes and powers of $\tilde T_n^0$ and $\tilde T_n^\alpha$   in the case of AO contamination with $p=3\%$ and $\sigma_v^2=10$.}\label{GARCH:AO2}
  \tabcolsep=2.3pt
   {\footnotesize
   \begin{tabular}{lcccccccrlccccccc}
\cmidrule{1-8}\cmidrule{10-17}      &       &       & \multicolumn{5}{c}{$\tilde T^\A_n$}      &       &       &       &       & \multicolumn{5}{c}{$\tilde T^\A_n$} \\
\cmidrule{4-8}\cmidrule{13-17}      & $n$   & $\tilde T^0_n$ &       & 0.1   & 0.2   & 0.3   & 0.5   &       &       & $n$   & $\tilde T^0_n$ &       & 0.1   & 0.2   & 0.3   & 0.5 \\
\cmidrule{1-8}\cmidrule{10-17}Sizes & 500   & 0.157 &       & 0.096 & 0.092 & 0.094 & 0.092 &       & Sizes & 500   & 0.074 &       & 0.065 & 0.070 & 0.068 & 0.082 \\
$(w, \alpha_1, \beta_1)$ & 1000  & 0.139 &       & 0.088 & 0.085 & 0.086 & 0.092 &       & $(w, \alpha_1, \beta_1)$ & 1000  & 0.060 &       & 0.058 & 0.060 & 0.060 & 0.064 \\
=(0.5, 0.2, 0.4) & 1500  & 0.121 &       & 0.090 & 0.092 & 0.090 & 0.087 &       & =(0.5, 0.15, 0.8) & 1500  & 0.054 &       & 0.059 & 0.064 & 0.065 & 0.064 \\
\cmidrule{1-8}\cmidrule{10-17}$w:0.5\rightarrow0.8$ & 500   & 0.190 &       & 0.246 & 0.322 & 0.338 & 0.302 &       & $w:0.5\rightarrow0.2$ & 500   & 0.154 &       & 0.154 & 0.154 & 0.168 & 0.158 \\
      & 1000  & 0.289 &       & 0.647 & 0.754 & 0.748 & 0.685 &       &       & 1000  & 0.292 &       & 0.348 & 0.365 & 0.350 & 0.315 \\
      & 1500  & 0.411 &       & 0.874 & 0.923 & 0.924 & 0.887 &       &       & 2000  & 0.687 &       & 0.854 & 0.854 & 0.816 & 0.726 \\
\cmidrule{1-8}\cmidrule{10-17}$\alpha_1:0.2\rightarrow0.5$ & 500   & 0.193 &       & 0.358 & 0.482 & 0.523 & 0.496 &       & $\alpha_1:0.15\rightarrow0.05$ & 500   & 0.264 &       & 0.331 & 0.376 & 0.398 & 0.380 \\
      & 1000  & 0.504 &       & 0.852 & 0.912 & 0.907 & 0.876 &       &       & 1000  & 0.631 &       & 0.785 & 0.827 & 0.820 & 0.750 \\
      & 1500  & 0.782 &       & 0.982 & 0.992 & 0.992 & 0.984 &       &       & 1500  & 0.900 &       & 0.968 & 0.966 & 0.957 & 0.927 \\
\cmidrule{1-8}\cmidrule{10-17}$\beta_1:0.4\rightarrow0.6$ & 500   & 0.182 &       & 0.318 & 0.432 & 0.466 & 0.432 &       & $\beta_1:0.8\rightarrow0.7$ & 500   & 0.204 &       & 0.252 & 0.276 & 0.282 & 0.262 \\
      & 1000  & 0.368 &       & 0.807 & 0.876 & 0.874 & 0.818 &       &       & 1000  & 0.485 &       & 0.647 & 0.676 & 0.664 & 0.604 \\
      & 1500  & 0.566 &       & 0.968 & 0.982 & 0.984 & 0.968 &       &       & 1500  & 0.808 &       & 0.931 & 0.936 & 0.919 & 0.872 \\
\cmidrule{1-8}\cmidrule{10-17}\end{tabular}}
\end{table}%

Table \ref{tab:GARCH} reports the results for uncontaminated cases. It can be seen that each $\tilde T_n^\alpha$ achieves good sizes in all cases. One can also see that the empirical powers of the tests increase as the sample size increases and the tests yield good powers except for the case where $(w,\alpha_1,\beta_1)=(0.5,0.15,0.8)$ changes to $(0.2,0.15,0.8)$, say Case$^*$. In this case, the proposed test is observed to be less powerful, so $n=2000$ is considered only for the case. As in the i.i.d. cases above, $\tilde T_n^\alpha$ performs similarly to $\tilde T_n^0$ when $\alpha$ is close to 0 and also shows a decreasing trend in powers as $\alpha$ increases. As seen in the previous Case$^*$, $\tilde T^\A_n$ with $\A>0.3$ can show significant loss in power, thus we recommend not to use too large an $\A$.

Results for the IO contaminated cases are presented in Tables \ref{GARCH:IO1} and \ref{GARCH:IO2}. We first note that $\tilde T_n^0$ exhibits significant power losses whereas $\tilde T_n^\alpha$ with $\A>0$ maintains good powers also except for Case$^*$. In the case of $(w,\alpha_1,\beta_1)=(0.5,0.15,0.8)$, the sizes of $\tilde T_n^0$ is severely distorted when  $p=3\%$ and $\sigma_v^2=10$, but the proposed test shows no distortions. Although $\tilde T_n^0$ yields higher empirical sizes in this case, substantial power losses are observed in $\tilde T_n^0$.

Tables \ref{GARCH:AO1} and \ref{GARCH:AO2} summarize the results for the cases of the AO contamination.
We can also see the power losses of $\tilde T_n^0$, but not as large as the IO contaminated cases. This indicates that the score test is more affected by IO than by AO. Interestingly, $\tilde T_n^0$ is almost insensitive to AO in the case of $(w,\alpha_1,\beta_1)=(0.5,0.15,0.8)$. But, even in this cases, $\tilde T_n^\alpha$ outperforms $\tilde T_n^0$.

Overall, our simulation results strongly support the validity of the proposed test. In this section, we can see that our test is  sufficiently robust against outliers and the test with $\A$ close to 0 is as powerful as the score test when data is not contaminated. However, we also observe that $\tilde T^\A_n$ can suffer from power losses when a large $\A$ is employed. So, one should be careful not to use $\A$ that is too large. As mentioned in Remark \ref{ALPHA}, we recommend to use an $\A$ in [0.1,0.3] based on the simulation results.

\section{Real data analysis}
\begin{figure}[!t]
\includegraphics[height=0.45\textwidth,width=\textwidth]{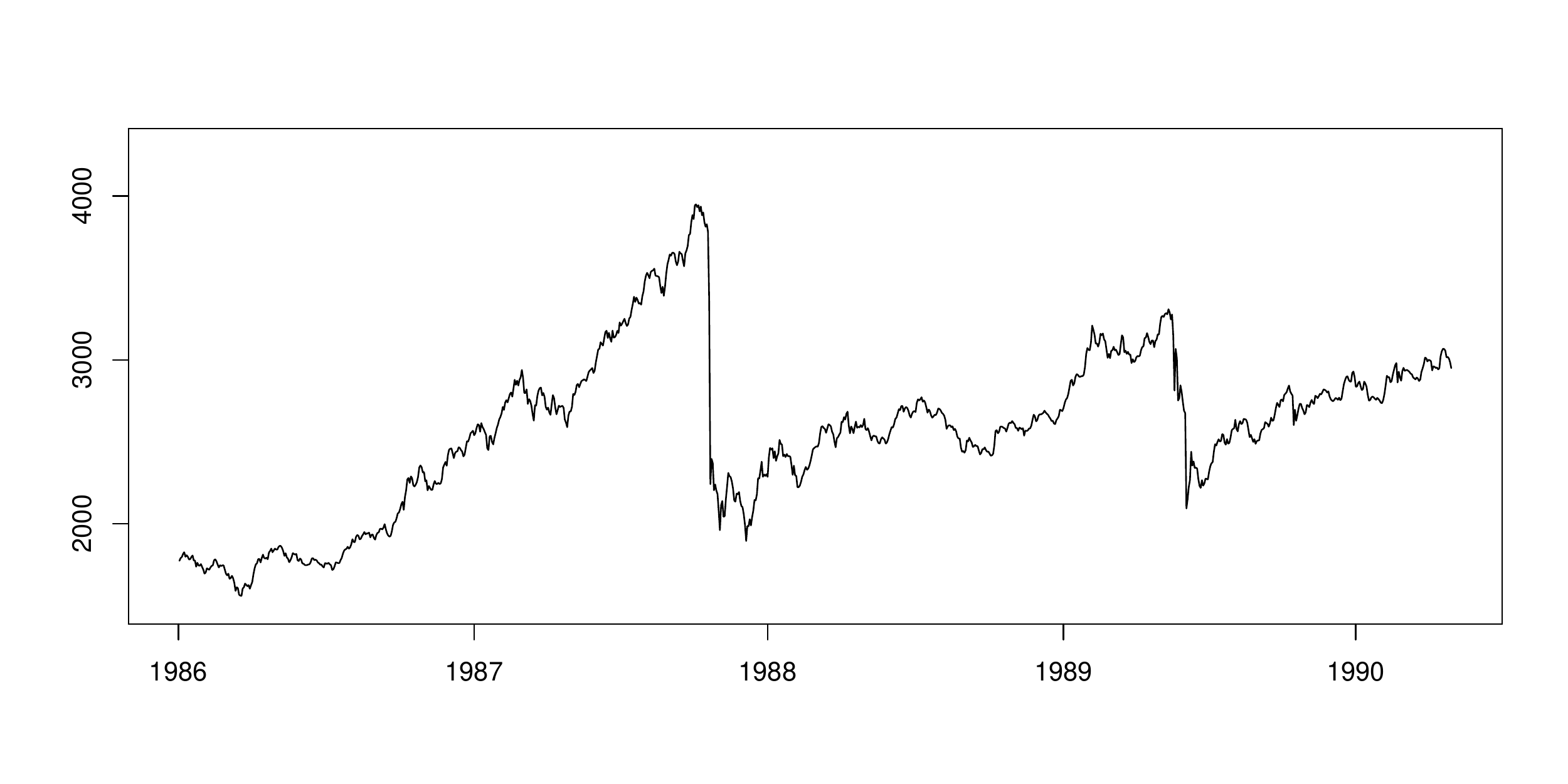}\vspace{-1cm}
\caption{\small Time series  plot of the Hang Seng index from Jan 2, 1986 to April 30, 1990}\label{HS}
%\end{figure}
%\begin{figure}[t]
\includegraphics[height=0.45\textwidth,width=1\textwidth]{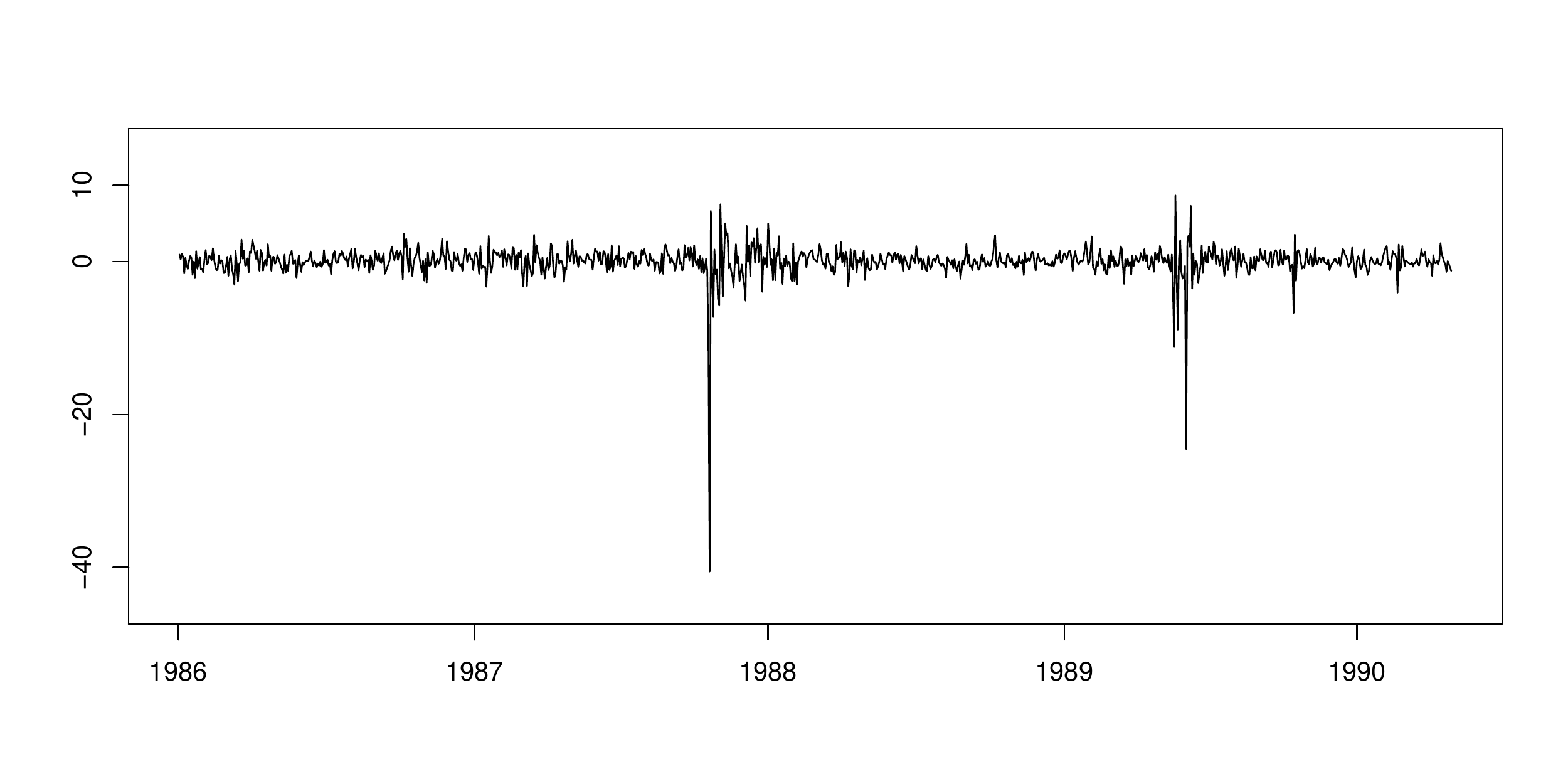}\vspace{-1cm}
\caption{\small Return series of the Hang Seng index from Jan 2, 1986 to April 30, 1990}\label{Return}
\end{figure}
In this section, we illustrate a real data application to the Hang Seng index in Hong Kong stock market. Our data consists of daily closing prices from Jan 2, 1986 to April 30, 1990. The index series $\{X_t\}$ and its return series $\{r_t\}$, where $X_t$ is the index value at time $t$ and $r_t=100\ log(X_t/X_{t-1})$, are displayed in Figures \ref{HS} and \ref{Return}, respectively. In Figure \ref{Return}, we can see some deviating observations which may interfere with correct statistical inferences. This data was previously analyzed by Lee and Song (2009), where they fitted GARCH(1,1) model to the data and estimated the model using MDPD estimator. In the present analysis, we also fit GARCH(1,1) model to the index data from 1986 to 1989 and examine whether there were parameter changes during the period. And then, based on each testing result, we calculate one-step-ahead out-of-sample forecasts for the conditional variance. A proper $\A$ for the data is chosen based on the forecasting results.

ML estimates of GARCH(1,1) model for the period from 1986 to 1989 are obtained as follows: $\hat w=0.112, \hat\A_1=0.282$, and $\hat\beta_1=0.740$.
Here, it should be noted that $\hat\A_1 +\hat\beta_1$ is greater than one, indicating that the parameters are estimated out of the stationary region of parameter space. One can guess that outlying observations unduly affected the ML estimation. As addressed in the studies such as Hillebrand (2005), parameter changes can also result in spuriously high estimates of $\A_1 +\beta_1$. Taking into account both possible cases, we perform the proposed test $\tilde T_n^\A$ for $\A \in \{0,0.1,0.2,\cdots, 1.0\}$,
 where $\tilde T_n^0$ is the score test. For comparison, we additionally conduct the following residual-based CUSUM test for parameter change:
 \[T_n^R:=\frac{1}{\sqrt{n}\hat{\tau}_n} \max_{1\leq k\leq n } \Big|\sum_{t=1}^k \hat{\ep}^2_{t} -\frac{k}{n}\sum_{t=1}^n\hat{\ep}_{t}^2\Big|,\]
where $\hat\ep_t$ denotes the residual in GARCH(1,1) model and $ \hat{\tau}_n^2=\frac{1}{n}\sum_{t=1}^n\hat{\ep}_{t}^4-\Big(\frac{1}{n}\sum_{t=1}^n\hat{\ep}_{t}^2\Big)^2$. Under $H_0$, $T_n^R$ converges in distribution to $\sup_{0\leq s \leq 1} |B^o(s)|$, where $\{B^o(s) | s\geq 0\}$ is the standard Brownian bridge (cf. Kulperger and Yu (2005) and Song and Kang (2018)). $T_n^R$ and its p-value are obtained to be 0.935 and 0.653, respectively. Hence, $T_n^R$ do not reject $H_0$.
\begin{table}[t]
  \centering
  {\small
  \tabcolsep=5pt
  \renewcommand{\arraystretch}{1.2}
  \caption{Test statistics [p-value] and estimated change point of the score test $\tilde T_n^0$ and $\tilde T_n^\A$}
    \begin{tabular}{cccccccccccc}
    \toprule
    $\A$       & 0     & 0.1        & 0.2        & 0.3        & 0.4        & 0.5        & 0.6        & 0.7        & 0.8        & 0.9        & 1.0 \\
    \midrule
    $\tilde T_n^\A$   & 0.681      & 3.069      & 3.755      & 4.051      & 4.256      & 4.369      & 4.288      & 4.102      & 3.887      & 3.679      & 3.491 \\
               & [0.964]    & [0.046]    & [0.015]    & [0.008]    & [0.005]    & [0.005]    & [0.005]    & [0.008]    & [0.011]    & [0.017]    & [0.023] \\
    chg.pt     & $\cdot$    & 575        & 575        & 575        & 568        & 568        & 568        & 568        & 568        & 568        & 568 \\
    \bottomrule
    \end{tabular}
  \label{tab:test}}
\end{table}%

Table \ref{tab:test} provides the test statistics, p-values, and estimated change-points. We first note that $\tilde T_n^\A$ with $\A>0$ produce p-values less than 0.05 whereas $\tilde T_n^0$ yields the p-value of 0.964. That is, for all $\A$ considered, the proposed tests reject $H_0$ but the score test do not reject. Recalling the simulation results that $\tilde T_n^0$ suffer from  power losses in the presence of outliers, we could presume that $\tilde T_n^0$ misses a significant parameter change. $\tilde T_n^\A$ with $\A\leq 0.3$ and $\A\geq 0.4$ locate 575 (May 4, 1988) and  568(April 25, 1988), respectively, as the change-points.
To detect further changes, we conduct $\tilde T_n^\A$ for each $\A>0$ using the binary segmentation method in Remark \ref{BS}, and one more change-point (Aug 17, 1987) is detected by $\tilde T_n^\A$ with $\A \leq 0.3$. The sub-period obtained by each $\tilde T_n^\A$ and estimation results are presented in Table \ref{tab:est}. From the table, one can see that all values of $\hat\A_1 +\hat\beta_1$ are less than one in all sub-periods. In particular, for every $\A$, $\A_1+\beta_1$ in the last sub-period is estimated to be smaller than in the previous sub-periods and $\hat w$ is obtained to be larger than before.

% Table generated by Excel2LaTeX from sheet 'Estimates
\begin{table}[h]
  \centering
{\small
\tabcolsep=7pt
  \caption{ML and MDPD estimates for each sub-period obtained by $\tilde T_n^\A$}
    \begin{tabular}{cccccccccccc}
    \toprule
    Period           & \multicolumn{3}{c}{1/2/86 - 12/29/89} &            &            &            &            &            &            &            &  \\
\cmidrule{2-4}               & $\hat w$   & $\hat\A_1$ & $\hat\beta_1$ &            &            &            &            &            &            &            &  \\
    MLE        & 0.112      & 0.282      & 0.740      &            &            &            &            &            &            &            &  \\
    \midrule
   Sub-Period            & \multicolumn{3}{c}{1/2/86 - 8/17/87} &            & \multicolumn{3}{c}{8/18/87 - 5/4/88} &            & \multicolumn{3}{c}{5/5/88 - 12/29/89} \\
\cmidrule{2-4}\cmidrule{6-8}\cmidrule{10-12}    MDPDE       & $\hat w$   & $\hat\A_1$ & $\hat\beta_1$ &            & $\hat w$   & $\hat\A_1$ & $\hat\beta_1$ &            & $\hat w$   & $\hat\A_1$ & $\hat\beta_1$ \\
    0.1        & 0.049      & 0.063      & 0.900      &            & 0.055      & 0.012      & 0.946      &            & 0.265      & 0.139      & 0.574 \\
    0.2        & 0.050      & 0.058      & 0.903      &            & 0.055      & 0.012      & 0.945      &            & 0.205      & 0.088      & 0.662 \\
    0.3        & 0.051      & 0.053      & 0.906      &            & 0.058      & 0.013      & 0.944      &            & 0.178      & 0.072      & 0.700 \\
    \midrule
     Sub-Period            & \multicolumn{3}{c}{1/2/86 - 4/25/88} &            & \multicolumn{3}{c}{4/26/88 - 12/29/89} &            &            &            &  \\
\cmidrule{2-4}\cmidrule{6-8}   MDPDE    & $\hat w$   & $\hat\A_1$ & $\hat\beta_1$ &            & $\hat w$   & $\hat\A_1$ & $\hat\beta_1$ &            &            &            &  \\
    0.4        & 0.062      & 0.020      & 0.931      &            & 0.136      & 0.051      & 0.764      &            &            &            &  \\
    0.5        & 0.063      & 0.021      & 0.929      &            & 0.118      & 0.042      & 0.793      &            &            &            &  \\
    0.6        & 0.065      & 0.022      & 0.927      &            & 0.110      & 0.040      & 0.802      &            &            &            &  \\
    0.7        & 0.067      & 0.023      & 0.925      &            & 0.108      & 0.040      & 0.802      &            &            &            &  \\
    0.8        & 0.069      & 0.025      & 0.922      &            & 0.108      & 0.042      & 0.799      &            &            &            &  \\
    0.9        & 0.071      & 0.026      & 0.919      &            & 0.108      & 0.043      & 0.794      &            &            &            &  \\
    1.0        & 0.072      & 0.028      & 0.916      &            & 0.109      & 0.045      & 0.789      &            &            &            &  \\
    \bottomrule
    \end{tabular}
  \label{tab:est}}
\end{table}%

Now, we calculate one-step-ahead out-of-sample forecasts of the conditional variance and compare forecasting performance of the models without and with parameter changes. Forecasting using the model with changes means that predicted values are obtained using the data after the last change-point. That is, letting $t_c$ be the last change-point, prediction of $\sigma^2_{t+1}$ is conducted using the observation $\{r_{t_c+1},\cdots,r_t\}$.  For $\A \leq 0.3$ and $\A\geq 0.4$, $t_c$ is 575 and 568, respectively. In the case of no change, $t_c$ is 0. For the purpose of comparison, we estimate the model without change using MLE. This situation describes that the data is analyzed without any robust methods. For the period from Jan 1990 to April 1990, total 80 observations, one-step-ahead predicted value, $\hat\sigma^2_{t,t+1}$, of the conditional variance is calculated as follows:
\[ {\hat\sigma}_{t,t+1}^2 = \hat w_t +\hat\A_t r_t^2 +\hat\beta_t \tilde{\sigma}_t^2(\hat w_t, \hat\A_t, \hat \beta_t),\]
where $\hat w_t, \hat\A_t$, and $\hat\beta_t$ are the estimates obtained using the data $\{r_{t_c+1},\cdots, r_t\}$ and $ \tilde{\sigma}_t^2(\hat w_t, \hat\A_t, \hat \beta_t)$ is the one recursively calculated as in (\ref{tilde1}). Since the true conditional variances  are unobservable, we use $r_{t+1}^2$ as a proxy of $\sigma^2_{t+1}$. The following root mean squared error (RMSE) is considered to evaluate forecasting performance:
\[ \sqrt{\frac{1}{80}\sum_{t=987}^{1066}  (r_{t+1}^2-  {\hat\sigma}_{t,t+1}^2)^2},\]
where $r_{988}$  and $r_{1067}$ are the return values at Jan 2, 1990 and April 30, 1990, respectively. Table \ref{tab:rmse} present the forecasting errors. One can see that the model with parameter change produces the smaller RMSE. In terms of forecasting performance, a proper $\A$ can be selected as 0.1, which produce smallest RMSE. Based on the estimation and the forecasting results, we can therefore conclude that the model with parameter change is better fitted to the data.

Our empirical findings support the usefulness of our proposed test. The proposed test can detect the parameter changes in the presence of deviating observations, whereas the score test and the residual-based CUSUM test miss the significant changes.  The parameters are estimated  comparatively differently in each sub-period divided by the proposed test, and
the models accommodating parameter change show better forecasting performances.  In such situation that seemingly outliers are included in a data set being suspected of having parameter changes, we expect that  our test can be a promising tool for detecting parameter change.
   %Different value may be chosen if other evaluating measure is employed.

% Table generated by Excel2LaTeX from sheet 'Sheet5'
\begin{table}[t]
  \centering
   \tabcolsep=5pt
  \renewcommand{\arraystretch}{1.2}
  {\small
 \caption{One-step-ahead forecasting errors for the models without and with parameter changes}
 \begin{tabular}{ccccccccccccc}
\toprule
Estimator  & MLE        &            & \multicolumn{10}{c}{MDPDE with $\A$} \\
\cmidrule{4-13}$\A$       & 0          &            & 0.1        & 0.2        & 0.3        & 0.4        & 0.5        & 0.6        & 0.7        & 0.8        & 0.9        & 1.0 \\
\cmidrule{2-2}\cmidrule{4-13}Model      & \multicolumn{1}{l}{without change} &            & \multicolumn{10}{c}{with change} \\
%\cmidrule{2-2}\cmidrule{4-13}
\midrule
RMSE       & 2.336      &            & 2.125      & 2.127      & 2.131      & 2.138      & 2.143      & 2.145      & 2.147      & 2.147      & 2.147      & 2.148 \\
\bottomrule
\end{tabular}
  \label{tab:rmse}}
\end{table}%

\section{Concluding remark}
In this study, we proposed a robust test for parameter change using the DP divergence. Since the DP divergence includes KL divergence, our test can be viewed as a generalized version of the score test. Under regularity conditions, the limiting null distribution of the proposed test is established.
Our simulation results demonstrated  that the proposed test is robust to outliers whereas the score test is damaged by outliers.  In particular, like the estimators induced from DP divergence, our test with small $\alpha$ is also observed to maintain strong robustness with little loss in power relative to the score test. In the real data analysis, the usefulness of the proposed test is demonstrated by locating some change-points that are not detected by the score test and the residual-based CUSUM test.

The MDPD estimation procedure can be conveniently applied to various parametric models including time series models and multivariate models. As seen in Section \ref{Sec:3}, once such MDPDE is set up, our test procedure can be readily extended to these models. We leave these extensions as a possible topic of future study.
%############################### Appendix ####################################################################

\section{Appendix}
In the present section, we provide the proofs of Theorems \ref{Thm1} and \ref{Thm3} for the case of $\A> 0$. The following lemma is very helpful in proving Theorem \ref{Thm1}.
%################################ Lemma1 ####################################################################
\begin{lemma}\label{Lm1}Suppose that the assumptions {\bf A1}-{\bf A6} in Theorem \ref{Thm1} hold. Then, under $H_0$, we have
\begin{eqnarray*}\label{Lm1.1}
\max_{ 1\leq k\leq n} \frac{k}{n}\Big\|\paa H_{\A,k}(\bar\T_{\A,n,k})+J_\A\Big\|=o(1)\quad a.s.,
\end{eqnarray*}
where $\{\bar\T_{\A,n,k}\,|\, 1\leq k\leq n, n\geq1\}$ is any double array of $\Theta$-valued random vectors with $\|\bar\T_{\A,n,k}-\T_0\| \leq \|\hat\T_{\A,n}-\T_0\|$.
\end{lemma}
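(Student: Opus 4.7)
The plan is to pass from the Hessian of $H_{\alpha,k}$ to averages of $\partial^2_{\theta\theta'} l_\alpha$, add and subtract the value at $\theta_0$ and its expectation, and then control the resulting two pieces separately. Since $\partial^2_{\theta\theta'} H_{\alpha,k}(\theta) = \frac{1}{k}\sum_{i=1}^k \partial^2_{\theta\theta'} l_\alpha(X_i;\theta)$ and $J_\alpha = -E[\partial^2_{\theta\theta'} l_\alpha(X;\theta_0)]$ by \textbf{A6}, we can write
\begin{align*}
\frac{k}{n}\bigl(\partial^2_{\theta\theta'} H_{\alpha,k}(\bar\theta_{\alpha,n,k})+J_\alpha\bigr)
&= \underbrace{\frac{1}{n}\sum_{i=1}^{k}\bigl[\partial^2_{\theta\theta'} l_\alpha(X_i;\bar\theta_{\alpha,n,k}) - \partial^2_{\theta\theta'} l_\alpha(X_i;\theta_0)\bigr]}_{(\mathrm{I})_{n,k}} \\
&\quad + \underbrace{\frac{1}{n}\sum_{i=1}^{k}\bigl[\partial^2_{\theta\theta'} l_\alpha(X_i;\theta_0) - E\partial^2_{\theta\theta'} l_\alpha(X;\theta_0)\bigr]}_{(\mathrm{II})_{n,k}}.
\end{align*}
It then suffices to show $\max_{1\leq k\leq n}\|(\mathrm{I})_{n,k}\|\to 0$ and $\max_{1\leq k\leq n}\|(\mathrm{II})_{n,k}\|\to 0$ almost surely.

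For $(\mathrm{II})_{n,k}$, the summands are i.i.d., centered, and integrable by \textbf{A5}–\textbf{A6}, so the partial-sum process $S_k$ satisfies $S_n/n\to 0$ a.s.\ by Kolmogorov's SLLN. A short sandwich argument (fix $\varepsilon>0$, pick the random $N$ with $|S_k|/k<\varepsilon$ for $k>N$; for such $k$, $|S_k|/n<\varepsilon$, while for $k\leq N$, $|S_k|/n\leq C(\omega)/n\to 0$) upgrades this to $\max_{1\leq k\leq n}\|S_k\|/n\to 0$ a.s.

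For $(\mathrm{I})_{n,k}$, set $\delta_n:=\|\hat\theta_{\alpha,n}-\theta_0\|$, which tends to $0$ a.s.\ by the strong consistency established via \textbf{A1}–\textbf{A3}. By the hypothesis, $\|\bar\theta_{\alpha,n,k}-\theta_0\|\leq\delta_n$ uniformly in $k$. Defining the modulus
\[
g_i(\delta):=\sup_{\|\theta-\theta_0\|\leq\delta}\bigl\|\partial^2_{\theta\theta'} l_\alpha(X_i;\theta) - \partial^2_{\theta\theta'} l_\alpha(X_i;\theta_0)\bigr\|,
\]
we obtain the bound $\max_{1\leq k\leq n}\|(\mathrm{I})_{n,k}\|\leq \frac{1}{n}\sum_{i=1}^n g_i(\delta_n)$, which no longer depends on $k$. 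By \textbf{A5}, $g_i(\delta)\leq 2\sup_{\theta\in N(\theta_0)}\|\partial^2_{\theta\theta'} l_\alpha(X_i;\theta)\|$ is integrable (once $\delta_n$ is small enough to sit inside $N(\theta_0)$, which happens eventually a.s.), and by continuity of $\partial^2_{\theta\theta'} l_\alpha$ in $\theta$, $g_i(\delta)\downarrow 0$ as $\delta\downarrow 0$ a.s. Dominated convergence gives $E[g_1(\delta)]\to 0$ as $\delta\to 0$. Monotonicity of $\delta\mapsto g_i(\delta)$ combined with the SLLN applied at a fixed small $\delta^*$ then yields $\frac{1}{n}\sum_{i=1}^n g_i(\delta_n)\to 0$ a.s.

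The main obstacle is the double-array indexing of $\bar\theta_{\alpha,n,k}$: a pointwise-in-$\theta$ LLN is not enough, because the evaluation point varies with both $n$ and $k$. The key insight that makes the argument go through cleanly is that the constraint $\|\bar\theta_{\alpha,n,k}-\theta_0\|\leq\delta_n$ is uniform in $k$, so the $k$-dependence collapses once we dominate by the envelope $g_i(\delta_n)$, reducing the problem to an ordinary (random-radius) uniform LLN on a shrinking neighborhood.
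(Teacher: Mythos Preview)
Your proof is correct and follows essentially the same route as the paper: the same add-and-subtract decomposition into a ``random-$\theta$ minus $\theta_0$'' piece and a ``centered at $\theta_0$'' piece, the same envelope bound over a shrinking neighborhood handled by the SLLN for the first piece, and the same reduction of the second piece to $\max_k \|S_k\|/n$ for centered i.i.d.\ partial sums. The only cosmetic differences are that the paper fixes a neighborhood $N_\varepsilon(\theta_0)$ with $E\sup_{\theta\in N_\varepsilon}\|\cdot\|<\varepsilon$ up front (your DCT step does this implicitly), and the paper splits the max in $(\mathrm{II})$ at $k=\sqrt{n}$ rather than at your random $N(\omega)$; both splits work.
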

\begin{proof}
By {\bf A5}, we have
\begin{eqnarray}\label{mom.cond1}
\E \sup_{\T \in N(\T_0)} \big\|\paa l_\A(X;\T)-\paa l_\A(X;\T_0)\big\| <\infty.
\end{eqnarray}
Then, for any $\ep>0$, we can take a neighborhood $N_\ep(\T_0)$ such that
\begin{eqnarray}\label{mom.cond2}
\E \sup_{\T \in N_\ep(\T_0)} \big\|\paa l_\A(X;\T)-\paa l_\A(X;\T_0)\big\| <\ep
\end{eqnarray}
by decreasing the neighborhood in (\ref{mom.cond1}) to the singleton $\T_0$.
Since $\HT$ converges almost surely to $\theta_0$, we have that for sufficiently large $n$,
\begin{eqnarray*}
&&\max_{ 1\leq k\leq n} \frac{k}{n}\Big\|\paa H_{\A,k}(\bar\T_{\A,n,k})+J_\A\Big\|\\
&&\leq \max_{ 1\leq k\leq n} \frac{k}{n}\big\|\paa H_{\A,k}(\bar\T_{\A,n,k})-\paa H_{\A,k}(\T_0)\big\|
+ \max_{ 1\leq k\leq n} \frac{k}{n}\Big\|\paa H_{\A,k}(\T_0)+J_\A\Big\|\\
&&\leq\frac{1}{n} \sum_{i=1}^n \sup_{\theta\in N_{\ep}(\theta_0)} \| \paa\,l_\A(X_i;\T)-\paa\,l_\A(X_i;\T_0)\|
+ \max_{ 1\leq k\leq n} \frac{k}{n}\Big\|\paa H_{\A,k}(\T_0)+J_\A\Big\|\\
&&:=I_n +II_n\quad a.s.
\end{eqnarray*}
Due to (\ref{mom.cond2}), we can see that
\begin{eqnarray*}
\lim_{n\rightarrow\infty}I_n =
\E\sup_{\theta \in N_\ep(\theta_0)}\| \paa l_\A (X;\T)- \paa l_\A (X;\T_0)\| < \ep\quad a.s.
\end{eqnarray*}
Also, using the fact that $\|\paa H_{\A,n}(\T_0)+J_\A\|$ converges to zero almost surely, we have
\begin{eqnarray}\label{Lm1.2}
&&\max_{1\leq k \leq \sqrt{n}}  \frac{k}{n}\Big\|\paa H_{\A,k}(\T_0)+J_\A\Big\|
\leq \frac{1}{\sqrt{n}} \sup_n \Big\|\paa H_{\A,n}(\T_0)+J_\A\Big\|=o(1)\quad a.s.
\end{eqnarray}
and
\begin{eqnarray}\label{Lm1.3}
&&\max_{\sqrt{n} < k \leq n} \Big\|\paa H_{\A,k}(\T_0)+J_\A\Big\| =o(1)\quad a.s.,
\end{eqnarray}
which subsequently yield $II_n=o(1)$ a.s.  The lemma is therefore obtained.
\end{proof}

%################################  Proof of Theorem 1 ####################################################################
\noindent{\bf Proof of Theorem \ref{Thm1}}.\\
Recall in (\ref{paH_final}) that
\begin{eqnarray*}
\frac{[ns]}{\sqrt{n}}\pa_{\theta}{H}_{\A,[ns]}(\HT)
&=&\underbrace{\frac{[ns]}{\sqrt{n}}\pa_{\theta}{H}_{\A,[ns]}(\T_0)+\frac{[ns]}{n}\paa{H}_{\A,[ns]}(\T^*_{\A,n,s})
J_\A^{-1}\sqrt{n}\,\pa_{\T}{H}_{\A,n}(\T_0)}_{I_{\A,n}(s)}\\
&&+\underbrace{\frac{[ns]}{n}\paa {H}_{\A,[ns]}(\T^*_{\A,n,s})
J_\A^{-1}(B_{\A,n}+J_\A)\sqrt{n}(\HT-\T_0)}_{II_{\A,n}(s)}.
\end{eqnarray*}
Since $\E[\pa_\T l_\A(X;\T_0)]=0$ and $\{\pa_\T l_\A(X_i;\T_0)\}$ is a sequence of i.i.d. random vectors, it follows from the functional central limit theorem that
\begin{eqnarray*}
\frac{[ns]}{\sqrt{n}} \pa_\T H_{\A,[ns]}(\T_0)\stackrel {w}{\longrightarrow}K_{\alpha}^{1/2}B_d(s)~~in~~\mathbb {D}([0,1],\mathbb {R}^d ),
\end{eqnarray*}
%where
%\[K_\A =\int f_{\T_0}^{2\A-1}(z) \pa_\T f_{\T_0}(z)\pa_\T f_{\T_0}(z)^T dz
% - \int f_{\T_0}^\A(z) \pa_\T f_{\T_0}(z)dz \int f_{\T_0}^\A(z) \pa_\T f_{\T_0}(z)^Tdz, \]
where $\{B_d(s)|0\leq s\leq1\}$ is a $d$-dimensional standard Brownian motion. Thus, by the continuous mapping theorem, we have
\begin{eqnarray}\label{b.bridge}
 I^o_{\A,n}(s):=\frac{[ns]}{\sqrt{n}}\pa_{\T}H_{\A,[ns]}(\T_0)-\frac{[ns]}{n}\sqrt{n}\,\pa_{\theta}H_{\A,n}(\theta_0)
\stackrel{w}{\longrightarrow}
K_\A^{1/2}B^o_d(s)\quad \rm{in}\ \ \mathbb{D}\,\big( [0,1],\, \mathbb{R}^d\big).
\end{eqnarray}
Using Lemma \ref{Lm1} and the fact that  $\sqrt{n}\,\pa_{\theta}H_{\A,n}(\T_0) $ is $O_P(1)$, we can see that
\begin{eqnarray*}
&&\sup_{0\leq s \leq1}\big\|I_{\A,n}(s)-I_{\A,n}^o(s)\big\|\\
&&=\sup_{0\leq s \leq1}\Big\|\frac{[ns]}{n}\paa H_{\A,[ns]}(\T^*_{\A,n,s})J_\A^{-1}\,\sqrt{n}\,\pa_{\theta}H_{\A,n}(\T_0)
+\frac{[ns]}{n}\sqrt{n}\,\pa_{\theta}H_{\A,n}(\T_0)\Big\|\\
&&\leq \Big\|J_\A^{-1} \sqrt{n}\,\pa_{\theta}H_{\A,n}(\T_0) \Big\|\,\max_{ 1\leq k\leq n} \frac{k}{n}\Big\|\paa H_{\A,k}(\T^*_{\A,n,k})+J_\A\Big\|
=o_P(1),
\end{eqnarray*}
where $\T^*_{\A,n,k}$ denotes the one corresponding to $\T^*_{\A,n,s}$ when $[ns]=k$, which together with (\ref{b.bridge}) yields
\begin{eqnarray}\label{I1}
I_{\A,n}(s)
\stackrel{w}{\longrightarrow}
 K_\A^{1/2}B^o_d(s)\quad \rm{in}\ \ \mathbb{d}\,\mathbb{D}\big( [0,1],\, \mathbb{R}^d\big).
\end{eqnarray}
Next, note that by Lemma \ref{Lm1},
\begin{eqnarray}\label{II1}
\sup_{0\leq s\leq1} \frac{[ns]}{n} \|\paa H_{\A,[ns]}(\T^*_{\A,n,s}) \|
\leq\max_{ 1\leq k\leq n} \frac{k}{n} \|\paa {H}_{\A,k}(\T^*_{\A,n,k})+J_\A\|
+\|J_\A\|=O(1)\quad a.s.
\end{eqnarray}
and
\begin{eqnarray}\label{II2}
\|J_\A^{-1}(B_{\A,n} +J_\A)\| \leq \|J_\A^{-1}\|\max_{ 1\leq k\leq n} \frac{k}{n}\Big\| \paa H_{\A,k}(\T^*_{\A,n,k})+J_\A\Big\|=o(1)\quad a.s.
\end{eqnarray}
Further, using $\sqrt{n}\pa_\T H_{\A,n}(\T_0)=O_P(1)$ again and $B_{\A,n} +J_\A=o_P(1)$ obtained in (\ref{II2}), one can see from (\ref{hat.theta}) that
$\sqrt n(\hat\T_{\A,n}-\T_0)=O_P(1)$. Therefore, combing this, (\ref{II1}), and (\ref{II2}), we have that  $\sup_{0\leq s\leq1}\|II_{\A,n}(s)\|=o_P(1)$. This completes the proof.
\hfill{$\Box$}\vspace{0.2cm}\\
%#################################  Lemma 4  ##########################################
%\begin{lemma}\label{Lm4} Suppose that the assumptions {\bf A1}-{\bf A6} hold. Then under
%$H_0$,
%\[\frac{1}{n}\sum_{i=1}^n \pa_\T l_\A(X_i;\hat\T_{\A,n})\pa_{\T'} l_\A(X_i;\hat\T_{\A,n})\stackrel{P}{\longrightarrow}
%\E\big[ \pa_\T l_\A(X;\T_0) \pa_{\T'} l_\A(X;\T_0)\big] .\]
%\end{lemma}
%\begin{proof}

%\end{proof}

%###################################################################################################
%###################################################################################################

The following part is provided for Theorem \ref{Thm3} in Section \ref{Sec:3}. Without confusion, $H_{\A,k}(\T)$  is hereafter used to denote the one obtained by replacing $\tilde l_\A(X_t;\T)$ with $l_\A(X_t;\T)$ in (\ref{root}). $l_\A(X_t;\T)$  is defined in Remark \ref{Rm.l}.

Theorem \ref{Thm3} is shown by similar arguments to that of Theorem \ref{Thm1}. Fundamental lemmas for the proof are Lemmas \ref{Lm.G1} and \ref{Lm.G4}. The first lemma states that the functional central limit theorem also holds for $\{\pa_\T \tilde
l_\A(X_t;\T_0)\}$ and the latter one provides the result corresponding to Lemma \ref{Lm1}, which is also usefully used in proving Theorem \ref{Thm3}. Lemmas \ref{Lm.G2} and \ref{Lm.G3} are given to verify the aforementioned two lemmas, where we employ the technical results in  Francq and Zako\"{i}an (2004) and Lee and Song (2009) to prove the lemmas.

%################################ Lemma3 ####################################################################

\begin{lemma}\label{Lm.G2} Suppose that the assumptions {\bf A1}-{\bf A4} in Theorem \ref{Thm3} hold. Then, under $H_0$, we have
\begin{eqnarray}\label{Lm.G2.0}
\frac{1}{\sqrt{n}}\sum_{t=1}^n\big\|\pa_{\T} l_\A(X_t;\T_0)-\pa_{\T} \tilde l_\A(X_t;\T_0)\big\|=o(1)\quad a.s.,
\end{eqnarray}
and, for some neighborhood $V_1(\T_0)$ of $\T_0$,
\begin{eqnarray}\label{Lm.G2.1}
\frac{1}{n}\sum_{t=1}^{n}\sup_{\theta \in V_1(\T_0)}\big\|\paa  l_\A(X_t;\theta)
-\paa  \tilde l_\A(X_t;\theta)\big\|&=& o(1)\quad a.s,
\end{eqnarray}
\begin{eqnarray}\label{Lm.G2.2}
\frac{1}{n}\sum_{t=1}^{n}\sup_{\theta \in V_1(\T_0)}\big\|\pa_\T  l_\A(X_t;\theta) \pa_{\T'}  l_\A(X_t;\theta)
-\pa_\T  \tilde l_\A(X_t;\theta) \pa_{\T'}  \tilde l_\A(X_t;\theta)\big\|&=& o(1)\quad a.s.
\end{eqnarray}
\end{lemma}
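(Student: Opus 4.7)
The plan rests on the standard GARCH fact that the approximation $\tilde\sigma_t^2$ is insensitive to initial values at a geometric rate. Under \textbf{A2}, $\rho^* := \sup_{\T \in \Theta}\sum_{j=1}^q \beta_j < 1$, so iterating the recursion $\sigma_t^2 - \tilde\sigma_t^2 = \sum_{j=1}^q \beta_j(\sigma_{t-j}^2 - \tilde\sigma_{t-j}^2)$ yields $\sup_{\T\in\Theta}|\sigma_t^2(\T)-\tilde\sigma_t^2(\T)|\leq K\rho^t$ for some $\rho\in(\rho^*,1)$ and a random constant $K$ with moments of every order (using the stationary moment bounds of Francq and Zako\"{i}an (2004, Theorem~2.1), as exploited in Lee and Song (2009)). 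Differentiating the recursion once and twice and iterating again gives the analogous bounds $\sup_{\T\in\Theta}\|\pa_\T\sigma_t^2-\pa_\T\tilde\sigma_t^2\|\leq K\rho^t$ and $\sup_{\T\in\Theta}\|\paa\sigma_t^2-\paa\tilde\sigma_t^2\|\leq K\rho^t$. Together with the lower bound $\inf_{\T\in\Theta}\omega\geq c>0$ coming from \textbf{A1}, this gives $\sigma_t^{-2}\vee\tilde\sigma_t^{-2}\leq c^{-1}$ uniformly in $\T$ and $t$.

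Next, I would expand the derivative differences algebraically. The chain rule shows that $\pa_\T l_\A(X_t;\T)$ is a finite combination of products involving $\sigma_t^{-2}$, $\pa_\T\sigma_t^2$ and the bounded quantity $\exp(-\A X_t^2/(2\sigma_t^2))$, with analogous structure for $\paa l_\A$ which additionally contains $\paa\sigma_t^2$. Telescoping termwise, each difference factor reduces to one of $\sigma_t^{-2k}-\tilde\sigma_t^{-2k}$, a derivative difference, or $\exp(-\A X_t^2/(2\sigma_t^2))-\exp(-\A X_t^2/(2\tilde\sigma_t^2))$; using the mean value theorem on $x\mapsto x^{-k}$ and the Lipschitz bound on $y\mapsto e^{-\A X_t^2 y/2}$ for $y\in[0,c^{-1}]$, each such factor is bounded by the geometric decay times a polynomial in $X_t^2$. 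Thus, on a sufficiently small neighborhood $V_1(\T_0)$ (chosen so that the uniform lower bound on $\omega$ and the uniform upper bound on $\sum\beta_j$ still hold), one obtains
\[
\sup_{\T\in V_1(\T_0)}\|\pa_\T l_\A(X_t;\T)-\pa_\T\tilde l_\A(X_t;\T)\|\leq K\rho^t\,P(X_t^2),
\]
and analogous bounds for the $\paa$ and outer-product differences.

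To conclude, stationarity and ergodicity of $\{X_t\}$ give $\E[P(X_t^2)]<\infty$ for the fixed polynomial $P$ that arises, so $\E\sum_{t=1}^\infty\rho^t P(X_t^2)<\infty$ and therefore $\sum_{t=1}^\infty\rho^t P(X_t^2)<\infty$ a.s. Hence $\sum_{t=1}^n\|\pa_\T l_\A(X_t;\T_0)-\pa_\T\tilde l_\A(X_t;\T_0)\|=O(1)$ a.s., and dividing by $\sqrt n$ yields (\ref{Lm.G2.0}); the same bounded partial sum divided by $n$ (with the supremum over $V_1(\T_0)$ absorbed into $K$ and $P$) delivers (\ref{Lm.G2.1}) and (\ref{Lm.G2.2}).

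The main obstacle is the careful algebraic telescoping: $\pa_\T l_\A$ and especially $\paa l_\A$ are nontrivial expressions in $\sigma_t^{-2}$, $\pa_\T\sigma_t^2$, $\paa\sigma_t^2$ and the exponential factor, and one must expand the difference as a sum in which each summand isolates a single factor of $(\sigma_t^2-\tilde\sigma_t^2)$-type while the remaining factors are dominated, uniformly in $t$ and in $\T\in V_1(\T_0)$, by a polynomial in $X_t^2$ with finite stationary mean. Checking this moment bound in full, including the contribution of $\|\pa_\T\sigma_t^2\|$ and $\|\paa\sigma_t^2\|$ whose logarithmic/polynomial envelopes are given in Francq and Zako\"{i}an (2004), is the only nontrivial step; everything else is routine.
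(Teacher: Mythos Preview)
Your overall strategy---geometric decay of $\sigma_t^2-\tilde\sigma_t^2$ and its derivatives, telescoping of the product differences, then summability via Borel--Cantelli/Ces\`aro---is exactly the route taken in the paper (which in turn borrows the computations from Lee and Song (2009)). The gap is in the envelope you propose and the moment claim you attach to it.

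You write that each remaining factor in the telescoping is ``dominated, uniformly in $t$ and in $\T\in V_1(\T_0)$, by a polynomial in $X_t^2$ with finite stationary mean,'' and then invoke $\E[P(X_t^2)]<\infty$. Under assumptions \textbf{A1}--\textbf{A4} alone this is not available: the GARCH process is only assumed strictly stationary and ergodic (top Lyapunov exponent negative), and no moment of $X_t$ is guaranteed---$\E X_t^2$ can be infinite, let alone $\E X_t^4$. Moreover, the ``remaining factors'' are not polynomials in $X_t^2$ at all: $\pa_\T\sigma_t^2$ depends on the whole past $(X_{t-1}^2,X_{t-2}^2,\ldots)$. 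The paper's fix (and this is the point of the bounds (\ref{FZ.2}) and (\ref{LS.1})) is to keep the $\sigma_t^{-2}$ factors attached throughout, so that the envelope is expressed in the \emph{ratios}
\[
\frac{X_t^2}{\sigma_t^2(\T)}\quad\text{and}\quad\frac{1}{\sigma_t^2(\T)}\pa_{\T_i}\sigma_t^2(\T),
\]
which do have moments of every order on a neighborhood of $\T_0$. Concretely, writing $\pa_\T l_\A(X_t;\T)=h_\A(\sigma_t^2)(1/\sigma_t^2)^{\A/2+1}\pa_\T\sigma_t^2$ and telescoping, the paper obtains the bound
\[
\big|\pa_{\T_i} l_\A(X_t;\T)-\pa_{\T_i} \tilde l_\A(X_t;\T)\big|\ \le\ K\rho^t\,P_{t,i}(\T),\qquad P_{t,i}(\T)=\Big(1+\frac{X_t^2}{\sigma_t^2}+\frac{X_t^4}{\sigma_t^4}\Big)\Big(1+\Big|\frac{1}{\sigma_t^2}\pa_{\T_i}\sigma_t^2\Big|\Big),
\]
with $\E\sup_{\T\in V_1(\T_0)}P_{t,i}(\T)^d<\infty$ for all $d$, and similarly for the second derivative and the outer product. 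If you replace your ``polynomial in $X_t^2$'' envelope by these ratio envelopes, your argument goes through; as written it does not.
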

\begin{proof}
The proof of the part (iv) in Lee and Song (2009), page 337, implies the first two results in the lemma, and thus we omit the proofs of (\ref{Lm.G2.0}) and (\ref{Lm.G2.1}).
To verify (\ref{Lm.G2.2}), we introduce the following technical results in  Francq and Zako\"{i}an (2004):
\begin{eqnarray}
&&\sup_{\T\in\Theta}\big\{|\,\sigma_t^2-{\tilde{\sigma}}_t^2| \vee \|\pa_\T \tilde{\sigma}_t^2-\pa_\T\sigma_t^2\|\big\} \leq K\rho^t \ a.s.\quad \text{for all } t\geq1, \label{FZ.1}\\
&&\E \sup_{\T\in \Theta^*}\Big|\frac{1}{\sigma_t^2} \pa_{\T_i} \sigma_t^2\Big|^d < \infty
,\quad \E \sup_{\T \in \Theta^*}\Big|\frac{1}{\sigma_t^2}\pa_{\T_i \T_j} \sigma_t^2\Big|^d < \infty \quad \text{for all } d \in \mathbb{N}, \label{FZ.2}
\end{eqnarray}
where $K>0$ and $\rho \in (0,1)$ denote universal constants, which can take different values from line
to line, and  $\Theta^*$ is a compact set such that $\theta_0 \in  \Theta^* \subset \Theta^o$. The following moment result can be found in Lemma 1 in Lee and Song (2009): for all $d \in \mathbb{N}$,
\begin{eqnarray}\label{LS.1}
\E \sup_{\T\in V_1(\T_0) }\frac{X_t^{2d}}{\sigma_t^{2d}}  < \infty,
\end{eqnarray}
where  $V_1(\T_0)$ is a neighborhood of $\T_0$. We shall also use the results in page 340 in Lee and Song (2009). That is,
\begin{eqnarray}
&&|h_{\A}(\tilde{\sigma}_t^2)| \leq \Big(1+\frac{X_t^2}{\sigma_t^2}\Big),\quad \big|h_{\A}(\sigma_t^2)-h_{\A}({\tilde{\sigma}}_t^2)\big| \leq K \Big(1+\frac{X_t^2}{\sigma_t^2}+\frac{X_t^4}{ \sigma_t^4}\Big)\rho^t,\label{LS.2}\\
&&
\bigg|\left(\frac{1}{\sigma_t^2}\right)^{\frac{\A}{2}+1}-\left(\frac{1}{\tilde{\sigma}_t^2}
\right)^{\frac{\A}{2}+1}\bigg| \leq \frac{K\rho^t}{\sigma_t^2},\label{LS.3}
\end{eqnarray}
where \begin{eqnarray*}
h_{\A}(x)=-\frac{\A}{2\sqrt{1+\A}}+\frac{1+\A}{2}\left(1-\frac{X_t^2}{x}\right)\exp\left(-\frac{\A}{2}\frac{X_t^2}{x}\right).
\end{eqnarray*}
Here, we note that
\begin{eqnarray}\label{pa.l}
\pa_\T l_\A(X_t;\T)&=&h_{\A}(\sigma_t^2){\left(\frac{1}{\sigma_t^2}\right)}^{\frac{\A}{2}+1}\pa_\T\sigma_t^2.
\end{eqnarray}
Recall that $\Theta$ is also a compact subset in $(0,\infty)\times[0,\infty)^{p+q}$. Then, we have
\[\frac{1}{\sigma_t^2} \vee \frac{1}{\tilde \sigma_t^2}\leq \sup_{\T\in\Theta} \frac{1}{w} \leq K.\]
Using this, (\ref{FZ.1}), (\ref{LS.2}), and (\ref{LS.3}), we have
\begin{eqnarray*}
&&\big|\pa_{\T_i} l_\A(X_t;\T)-\pa_{\T_i} \tilde l_\A(X_t;\T)\big|\\
&&=\bigg|\big(h_{\A}(\sigma_t^2)-h_{\A}({\tilde{\sigma}}_t^2)\big){\left(\frac{1}{\sigma_t^2}
\right)}^{\frac{\A}{2}+1}\pa_{\T_i}\sigma_t^2
+h_{\A}({\tilde{\sigma}}_t^2)\bigg\{\left(\frac{1}{\sigma_t^2}\right)^{\frac{\A}{2}+1}-\left(\frac{1}{\tilde{\sigma}_t^2}
\right)^{\frac{\A}{2}+1}\bigg\} \pa_{\T_i}\sigma_t^2\\
&&
\quad+h_{\A}(\tilde{\sigma}_t^2)\left(\frac{1}{\tilde{\sigma}_t^2}
\right)^{\frac{\A}{2}+1}\big(\pa_{\T_i}\sigma_t^2-\pa_{\T_i}\tilde{\sigma}_t^2\big)\bigg|\\
&&\leq K\Big( 1+\frac{X_t^2}{\sigma_t^2}+\frac{X_t^4}{\sigma_t^4}\Big)\Big|\frac{1}{\sigma_t^2}\pa_{\T_i} \sigma_t^2\Big|\rho^t
+ K\Big( 1+\frac{X_t^2}{\sigma_t^2}\Big)\Big|\frac{1}{\sigma_t^2}\pa_{\T_i} \sigma_t^2\Big|\rho^t
+ K\Big( 1+\frac{X_t^2}{\sigma_t^2}\Big)\rho^t   \\
&&\leq K\Big( 1+\frac{X_t^2}{\sigma_t^2}+\frac{X_t^4}{\sigma_t^4}\Big)\Big(1+\Big|\frac{1}{\sigma_t^2}\pa_{\T_i} \sigma_t^2\Big|\Big)\rho^t :=K P_{t,i}(\T)\rho^t.
\end{eqnarray*}
Since $\big|\pa_{\T_i} l_\A(X_t;\T)\big| \leq K P_{t,i}(\T)$, we can also see from the above that
\begin{eqnarray*}
\big|\pa_{\T_i} \tilde l_\A(X_t;\T)\big| \leq \big|\pa_{\T_i} l_\A(X_t;\T)\big| + K P_{t,i}(\T)\leq   K P_{t,i}(\T),
\end{eqnarray*}
and thus, we have
\begin{eqnarray*}
&&\big|\pa_{\T_i}  l_\A(X_t;\theta) \pa_{\T_j}  l_\A(X_t;\theta)
-\pa_{\T_i}  \tilde l_\A(X_t;\theta) \pa_{\T_j}  \tilde l_\A(X_t;\theta)\big|\\
&&\leq
\big|\pa_{\T_i}  l_\A(X_t;\theta)\big| \big|\pa_{\T_j}  l_\A(X_t;\theta)- \pa_{\T_j} \tilde l_\A(X_t;\theta)\big|
+
\big|\pa_{\T_j}  \tilde l_\A(X_t;\theta)\big| \big|\pa_{\T_i}  l_\A(X_t;\theta)- \pa_{\T_i} \tilde l_\A(X_t;\theta)\big| \\
&& \leq K P_{t,i}(\T) P_{t,j}(\T)\rho^t.
\end{eqnarray*}
Using the Cauchy-Schwarz inequality  with the first result in (\ref{FZ.2}) and (\ref{LS.1}), we can see that for all $d \in \mathbb{N}$,
\begin{eqnarray}\label{m.P}
\E \sup_{\T \in V_1(\T_0)} P_{t,i}^d(\T) <\infty.
\end{eqnarray} Therefore, since
\begin{eqnarray*}
\sum_{t=1}^\infty P\Big( \rho^t \sup_{\T \in V_1(\T_0)} P_{t,i}(\T) P_{t,j}(\T) >\epsilon \Big)
&\leq &\frac{1}{\epsilon} \sum_{t=1}^\infty  \rho^t \sqrt{\E \sup_{\T \in V_1(\T_0)} P^2_{t,i}(\T)}\sqrt{\E \sup_{\T \in V_1(\T_0)} P^2_{t,j}(\T)}\\
&<&\infty,
\end{eqnarray*}
$\rho^t\sup_{\T \in V_1(\T_0)} P_{t,i}(\T) P_{t,j}(\T)$ converges to zero with probability one. Hence, (\ref{Lm.G2.2}) is asserted from the Ces\`{a}ro lemma.
\end{proof}

%################################ Lemma4 ####################################################################

\begin{lemma}\label{Lm.G3} Suppose that the assumptions {\bf A1}-{\bf A4} in Theorem \ref{Thm3} hold. Then, under $H_0$, we have that for some neighborhood $V_2(\T_0)$ of $\T_0$,
\begin{eqnarray*}\label{K}
\E\sup_{\theta \in V_2(\T_0)}\big\|\pa_{\T'}  l_\A(X_t;\T)\pa_{\T}  l_\A(X_t;\T)\big\|  <\infty\quad\mbox{and}\quad \E\sup_{\theta \in V_2(\T_0)}\big\|\paa  l_\A(X_t;\T)\big\|  <\infty.
\end{eqnarray*}
\end{lemma}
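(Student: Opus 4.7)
The plan is to directly bound both quantities using the explicit structural formula for $\pa_\T l_\A(X_t;\T)$ given in \eqref{pa.l} and then differentiate once more for $\paa l_\A$, exploiting three facts repeatedly: (i) $1/\sigma_t^2 \le 1/w \le K$ uniformly on $\Theta$, since the $\W$-coordinate is bounded away from $0$; (ii) the moment bounds \eqref{FZ.2} on $(1/\sigma_t^2)\pa_{\T_i}\sigma_t^2$ and $(1/\sigma_t^2)\pa_{\T_i\T_j}^2\sigma_t^2$ of arbitrary order; and (iii) the moment bound \eqref{LS.1} on $X_t^{2d}/\sigma_t^{2d}$ of arbitrary order in a neighbourhood $V_1(\T_0)$. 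Choose $V_2(\T_0)$ as the intersection of $V_1(\T_0)$ with the compact set $\Theta^*$ appearing in \eqref{FZ.2}.

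For the first bound, start from \eqref{pa.l} to write
\[\bigl\|\pa_{\T'} l_\A(X_t;\T)\,\pa_\T l_\A(X_t;\T)\bigr\|
= h_\A(\sigma_t^2)^2\Big(\tfrac{1}{\sigma_t^2}\Big)^{\A+2}\bigl\|\pa_\T\sigma_t^2\,\pa_{\T'}\sigma_t^2\bigr\|.\]
Apply the bound $|h_\A(\sigma_t^2)| \le K(1+X_t^2/\sigma_t^2)$ (immediate from \eqref{LS.2} with $\sigma_t^2$ in place of $\tilde\sigma_t^2$), absorb the remaining $(1/\sigma_t^2)^{\A}$ into the constant $K$, and factor $(1/\sigma_t^2)^2$ into the two derivative factors to obtain
\[\bigl\|\pa_{\T'} l_\A(X_t;\T)\,\pa_\T l_\A(X_t;\T)\bigr\|
\le K\Big(1+\tfrac{X_t^2}{\sigma_t^2}\Big)^{2}\,\Big\|\tfrac{1}{\sigma_t^2}\pa_\T\sigma_t^2\Big\|\,\Big\|\tfrac{1}{\sigma_t^2}\pa_{\T'}\sigma_t^2\Big\|.\]
Take $\sup_{\T\in V_2(\T_0)}$ inside the right-hand side, take expectation, and apply Cauchy--Schwarz twice: by \eqref{LS.1} the first factor has all moments, and by \eqref{FZ.2} so do the other two. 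The finiteness of the first expectation follows.

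For the second bound, differentiate \eqref{pa.l} once more. Writing $g_\A(\sigma_t^2):=h_\A(\sigma_t^2)(1/\sigma_t^2)^{\A/2+1}$, the product rule yields
\[\paa l_\A(X_t;\T)=g_\A'(\sigma_t^2)\,\pa_\T\sigma_t^2\,\pa_{\T'}\sigma_t^2
+g_\A(\sigma_t^2)\,\paa\sigma_t^2.\]
A direct calculation gives $h_\A'(x)=\frac{1+\A}{2}\,\frac{X_t^2}{x^2}\exp\!\big(-\tfrac{\A}{2}\tfrac{X_t^2}{x}\big)\bigl[1+\tfrac{\A}{2}(1-X_t^2/x)\bigr]$, so $g_\A'(\sigma_t^2)$ is a finite linear combination of terms of the form (polynomial of degree at most two in $X_t^2/\sigma_t^2$)$\,\cdot\,\exp(-\A X_t^2/(2\sigma_t^2))\,\cdot\,(1/\sigma_t^2)^{\A/2+2}$, which, using that $y^k e^{-\A y/2}$ is bounded for $y\ge 0$ and again absorbing powers of $1/\sigma_t^2$ into $K$, is bounded by $K(1+X_t^2/\sigma_t^2)(1/\sigma_t^2)^2$. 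Repeating the treatment of the first bound, this yields
\[\|\paa l_\A(X_t;\T)\| \le K\Big(1+\tfrac{X_t^2}{\sigma_t^2}\Big)\Big\|\tfrac{1}{\sigma_t^2}\pa_\T\sigma_t^2\Big\|\,\Big\|\tfrac{1}{\sigma_t^2}\pa_{\T'}\sigma_t^2\Big\|
+K\Big(1+\tfrac{X_t^2}{\sigma_t^2}\Big)\,\Big\|\tfrac{1}{\sigma_t^2}\paa\sigma_t^2\Big\|.\]
Taking $\sup_{\T\in V_2(\T_0)}$ and applying Cauchy--Schwarz/Hölder together with \eqref{LS.1} and \eqref{FZ.2} finishes the proof.

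The main obstacle is the bookkeeping for $g_\A'$: one must check that every differentiated factor coming from $h_\A$ and from $(1/\sigma_t^2)^{\A/2+1}$ still produces a bound that is at most a low-degree polynomial in $X_t^2/\sigma_t^2$ (so that \eqref{LS.1} applies) times a bounded power of $1/\sigma_t^2$. This reduces entirely to the elementary observation that $y^k e^{-\A y/2}$ is bounded on $[0,\infty)$ for every $k\ge 0$ whenever $\A>0$, together with the uniform bound $1/\sigma_t^2\le 1/w\le K$ on $\Theta$.
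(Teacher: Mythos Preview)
Your proof is correct and follows essentially the same route as the paper's: bound the outer product via the factorization \eqref{pa.l} and $|h_\A(\sigma_t^2)|\le K(1+X_t^2/\sigma_t^2)$, then apply Cauchy--Schwarz with \eqref{FZ.2} and \eqref{LS.1}; for the Hessian, differentiate \eqref{pa.l} once more and bound the resulting coefficient of $\pa_\T\sigma_t^2\,\pa_{\T'}\sigma_t^2$ and of $\paa\sigma_t^2$ in the same way. The paper recycles the quantity $P_{t,i}$ and the moment bound \eqref{m.P} from the preceding lemma, and writes out the second-derivative coefficient explicitly as a function $m_\A$, whereas you work directly and invoke the boundedness of $y^k e^{-\A y/2}$ to control the differentiated terms; these are cosmetic differences within the same argument.
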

\begin{proof}
Since $\big|\pa_{\T_i} l_\A(X_t;\T)\, \pa_{\T_j} l_\A(X_t;\T)\big|\leq K P_{t,i}(\T) P_{t,j}(\T)$, the first result is obtained by (\ref{m.P}) and   the Cauchy-Schwarz inequality.
A straightforward calculation shows that
\begin{eqnarray*}
\paa l_\A(X_t;\T)=
h_{\A}(\sigma_t^2){\left(\frac{1}{\sigma_t^2}\right)}^{\frac{\A}{2}+1}\paa \sigma_t^2 +m_{\A}(\sigma_t^2){\left(\frac{1}{\sigma_t^2}\right)}^{\frac{\A}{2}+2}
\pa_\T \sigma_t^2\,\pa_{\T'}\sigma_t^2,
\end{eqnarray*}
where
\begin{eqnarray*}
m_{\A}(x)=\frac{\A(2+\A)}{4\sqrt{1+\A}}-\frac{1+\A}{2}\left\{1+\frac{\A}{2}-(2+\A)\frac{X_t^2}{x}+\frac{\A}{2}\frac{X_t^4}{x^2}\right\}\exp\left(-\frac{\A}{2}\frac{X_t^2}{x}\right).
\end{eqnarray*}
The second one can also be readily shown by using (\ref{FZ.2}) and (\ref{LS.1}), so we omit the proof for brevity.
\end{proof}

%################################ Lemma2 ####################################################################
\begin{lemma}\label{Lm.G1} Suppose that the assumptions {\bf A1}-{\bf A4} in Theorem \ref{Thm3} hold. Then, under $H_0$, we have
\begin{eqnarray*}
\frac{1}{\sqrt{k(\A)}} J_{1,\A}^{-1/2}\frac{[ns]}{\sqrt{n}} \pa_\T \tilde
H_{\alpha,[ns]}(\theta_0)\stackrel {w}{\longrightarrow}B_D(s)~~in~~\mathbb {D}([0,1],\mathbb {R}^D ),
\end{eqnarray*}
where $B_D$ is a standard $D$-dimensional Brownian motion and $D=p+q+1$.
\end{lemma}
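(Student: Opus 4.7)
The plan is to pass from the recursively approximated score $\pa_\T \tilde H_{\A,[ns]}(\T_0)$ to the ``ideal'' score $\pa_\T H_{\A,[ns]}(\T_0)=\frac{1}{[ns]}\sum_{t=1}^{[ns]}\pa_\T l_\A(X_t;\T_0)$, establish a multivariate functional central limit theorem for the latter, and then combine the two. The first step is supplied directly by part (\ref{Lm.G2.0}) of Lemma \ref{Lm.G2}: since
\begin{align*}
\sup_{0\leq s\leq 1}\Big\|\tfrac{[ns]}{\sqrt n}\bigl(\pa_\T \tilde H_{\A,[ns]}(\T_0)-\pa_\T H_{\A,[ns]}(\T_0)\bigr)\Big\|\leq \tfrac{1}{\sqrt n}\sum_{t=1}^n \bigl\|\pa_\T \tilde l_\A(X_t;\T_0)-\pa_\T l_\A(X_t;\T_0)\bigr\|,
\end{align*}
the right-hand side is $o(1)$ a.s., so the tilde and non-tilde partial-sum processes share the same weak limit on $\mathbb D([0,1],\mathbb R^D)$.

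For the ideal version I would first verify that $\{\pa_\T l_\A(X_t;\T_0)\}_t$ is a stationary ergodic square-integrable martingale difference sequence with respect to the natural filtration $\mathcal F_t=\sigma(X_s:s\leq t)$. Stationarity and ergodicity are inherited from $\{X_t\}$, because $\pa_\T l_\A(X_t;\T_0)$ is a measurable function of $\{X_s:s\leq t\}$. The martingale difference property follows because, given $\mathcal F_{t-1}$, we have $X_t=\sigma_t(\T_0)\ep_t$ with $\ep_t\sim N(0,1)$ independent of $\mathcal F_{t-1}$ and $\sigma_t^2(\T_0)$ is $\mathcal F_{t-1}$-measurable; differentiating under the integral sign (Assumption \textbf{A4} of Section~2, which carries over to the conditional setting here) shows that, for fixed $\sigma_t^2(\T_0)$, the conditional map $\T\mapsto E[l_\A(X_t;\T)\mid \mathcal F_{t-1}]$ is minimized at $\T=\T_0$, giving $E[\pa_\T l_\A(X_t;\T_0)\mid \mathcal F_{t-1}]=0$. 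Square-integrability is provided by Lemma \ref{Lm.G3}, and Remark \ref{Rm.l} identifies the covariance as $E[\pa_\T l_\A(X_t;\T_0)\pa_{\T'} l_\A(X_t;\T_0)]=k(\A)\,J_{1,\A}$.

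With these ingredients I invoke Billingsley's functional central limit theorem for stationary ergodic martingale difference sequences (coupled with the Cram\'er--Wold device for the multivariate version) to obtain
\begin{align*}
\tfrac{[ns]}{\sqrt n}\pa_\T H_{\A,[ns]}(\T_0)\stackrel{w}{\longrightarrow}\bigl(k(\A)\,J_{1,\A}\bigr)^{1/2}B_D(s)\quad\text{in}\ \mathbb D([0,1],\mathbb R^D).
\end{align*}
Combining with the uniform negligibility of the tilde remainder and then pre-multiplying by the continuous linear map $k(\A)^{-1/2}J_{1,\A}^{-1/2}$ (whose existence rests on the non-singularity of $J_{1,\A}$ noted in Remark \ref{Rm.l}) delivers the desired convergence to a standard $D$-dimensional Brownian motion $B_D$.

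The main obstacle is justifying the FCLT for a \emph{stationary ergodic} (rather than i.i.d.) martingale difference sequence together with the careful interchange of derivative and conditional integral needed to establish the martingale difference structure; once these are in place, together with the uniform $o(1)$ control of the tilde-minus-non-tilde discrepancy inherited from Lemma \ref{Lm.G2}, the remaining manipulations are routine.
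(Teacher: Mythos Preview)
Your proposal is correct and follows essentially the same route as the paper: establish the FCLT for the partial sums of $\pa_\T l_\A(X_t;\T_0)$ via a stationary ergodic martingale-difference argument with covariance $k(\A)J_{1,\A}$, then transfer the result to the tilde version by the uniform $o(1)$ bound from (\ref{Lm.G2.0}). The only cosmetic difference is in justifying $E[\pa_\T l_\A(X_t;\T_0)\mid\mathcal F_{t-1}]=0$: the paper simply notes that $h_\A(\sigma_t^2(\T_0))$ depends on $X_t$ only through $\ep_t^2$ and verifies $E[h_\A(\sigma_t^2(\T_0))]=0$ by direct Gaussian integration, whereas you obtain it from a conditional minimization argument; both are valid, the paper's being slightly more direct.
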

\begin{proof}
By simple algebra, we have $\E [h_\A(\sigma_t^2(\T_0)]=0$, and hence $\{\pa_\T l_\A(X_t;\T_0) \}$ becomes a
martingale difference process. Since $\{\pa_\T l_\A(X_t;\T_0) \}$ is also strictly stationary and ergodic, we can obtain that by the functional central limit theorem for martingale difference,
\begin{eqnarray}\label{FCLT}
\frac{1}{\sqrt{k(\A)}} J_{1,\A}^{-1/2}\frac{[ns]}{\sqrt{n}} \pa_\T
H_{\alpha,[ns]}(\theta_0)\stackrel {w}{\longrightarrow} B_D(s)~~in~~\mathbb {D}([0,1],\mathbb {R}^D ).
\end{eqnarray}
Due to (\ref{Lm.G2.0}), we also have
\begin{eqnarray*}
\sup_{0\leq s\leq 1} \frac{[ns]}{\sqrt{n}}\big\|\pa_\T H_{\A,[ns]}(\T_0)-\pa_\T \tilde H_{\A,[ns]}(\T_0)\big\| \leq
\frac{1}{\sqrt{n}}\sum_{t=1}^n\big\|\pa_{\T} l_\A(X_t;\T_0)-\pa_{\T} \tilde l_\A(X_t;\T_0)\big\|=o(1)\quad a.s.,
\end{eqnarray*}
which together with (\ref{FCLT}) ensures the lemma.
\end{proof}

%################################ Lemma5 ####################################################################
\begin{lemma}\label{Lm.G4}Suppose that the assumptions {\bf A1}-{\bf A4} in Theorem \ref{Thm3} hold. Then, under $H_0$, we have
\begin{eqnarray*}
\max_{ 1\leq k\leq n} \frac{k}{n}\big\|\paa \tilde{H}_{\A,k}(\T^*_{\A,n,k})+J_\A\big\|=o_P(1).
\end{eqnarray*}
where $\{\T^*_{\A,n,k}\,|\, 1\leq k\leq n, n\geq1\}$ is any double array of $\Theta$-valued random vectors with $\|\T^*_{\A,n,k}-\T_0\| \leq \|\hat\T_{\A,n}-\T_0\|$.
\end{lemma}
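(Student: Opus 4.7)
The plan is to follow the three-part scheme used in Lemma~\ref{Lm1}, with an extra step inserted to pass from $\paa\tilde l_\A$ to $\paa l_\A$. The triangle inequality gives
\begin{eqnarray*}
\frac{k}{n}\big\|\paa\tilde H_{\A,k}(\T^*_{\A,n,k})+J_\A\big\|
&\leq& \frac{k}{n}\big\|\paa\tilde H_{\A,k}(\T^*_{\A,n,k})-\paa H_{\A,k}(\T^*_{\A,n,k})\big\|\\
&&+\frac{k}{n}\big\|\paa H_{\A,k}(\T^*_{\A,n,k})-\paa H_{\A,k}(\T_0)\big\|\\
&&+\frac{k}{n}\big\|\paa H_{\A,k}(\T_0)+J_\A\big\|,
\end{eqnarray*}
and I would bound $\max_{1\leq k\leq n}$ of each of the three pieces separately. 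Since $\hat\T_{\A,n}\to\T_0$ almost surely by Proposition~\ref{Prop1} and $\|\T^*_{\A,n,k}-\T_0\|\leq\|\hat\T_{\A,n}-\T_0\|$, for all sufficiently large $n$ every $\T^*_{\A,n,k}$ lies simultaneously in the fixed neighborhoods $V_1(\T_0)$ and $V_2(\T_0)$ of Lemmas~\ref{Lm.G2} and~\ref{Lm.G3}.

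For the first piece, the bound
\[\max_{1\leq k\leq n}\frac{k}{n}\big\|\paa\tilde H_{\A,k}(\T^*_{\A,n,k})-\paa H_{\A,k}(\T^*_{\A,n,k})\big\|\leq\frac{1}{n}\sum_{t=1}^n\sup_{\T\in V_1(\T_0)}\big\|\paa l_\A(X_t;\T)-\paa\tilde l_\A(X_t;\T)\big\|\]
is $o(1)$ almost surely by (\ref{Lm.G2.1}). For the third piece, Remark~\ref{Rm.l} together with $J_\A=-g(\A)J_{2,\A}$ gives $\E[\paa l_\A(X_t;\T_0)]=-J_\A$, and the integrable envelope of Lemma~\ref{Lm.G3} justifies applying Birkhoff's ergodic theorem to the strictly stationary ergodic sequence $\{\paa l_\A(X_t;\T_0)\}$, so $\|\paa H_{\A,n}(\T_0)+J_\A\|\to 0$ almost surely. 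Splitting the range of $k$ as in (\ref{Lm1.2})--(\ref{Lm1.3}) (a trivial $1/\sqrt n$ bound on $[1,\sqrt n]$ and tail-uniformity on $(\sqrt n,n]$) then upgrades this to $\max_{1\leq k\leq n}\frac{k}{n}\|\paa H_{\A,k}(\T_0)+J_\A\|=o(1)$ almost surely.

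The main obstacle is the middle piece, which I would handle by a shrinking-neighborhood argument. Using continuity of $\paa l_\A(X_t;\cdot)$ at $\T_0$ (inherited from the smoothness of $\sigma_t^2(\T)$ together with the explicit forms of $h_\A$ and $m_\A$ exhibited in the proof of Lemma~\ref{Lm.G3}) and the integrable envelope $\E\sup_{\T\in V_2(\T_0)}\|\paa l_\A(X_t;\T)\|<\infty$ from that same lemma, dominated convergence lets me pick, for any prescribed $\ep>0$, a neighborhood $N_\ep(\T_0)\subset V_1(\T_0)\cap V_2(\T_0)$ with
\[\E\sup_{\T\in N_\ep(\T_0)}\big\|\paa l_\A(X_t;\T)-\paa l_\A(X_t;\T_0)\big\|<\ep.\]
Since eventually $\T^*_{\A,n,k}\in N_\ep(\T_0)$ for every $k\leq n$, Birkhoff's theorem applied to this envelope bounds the middle piece by $\ep$ almost surely for all sufficiently large $n$; letting $\ep\downarrow 0$ yields the required $o(1)$ control. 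Combining the three estimates proves the claimed $o_P(1)$ convergence. The delicate step is precisely the simultaneous use of the tilde-to-non-tilde passage and the dominated-convergence refinement of the neighborhood, which together require the full strength of Lemmas~\ref{Lm.G2} and~\ref{Lm.G3}.
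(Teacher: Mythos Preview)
Your proposal is correct and follows essentially the same route as the paper: the identical three-term triangle-inequality decomposition, with the first term handled by (\ref{Lm.G2.1}), the middle term by the shrinking-neighborhood argument based on the envelope in Lemma~\ref{Lm.G3} together with the ergodic theorem, and the last term by the ergodic theorem plus the $[1,\sqrt n]$/$(\sqrt n,n]$ split of (\ref{Lm1.2})--(\ref{Lm1.3}). The only cosmetic difference is that the paper fixes $\ep$ and takes $V_\ep(\T_0)=V_1(\T_0)\cap N_\ep(\T_0)$ at the outset, whereas you intersect with $V_2(\T_0)$ as well and phrase the conclusion via ``letting $\ep\downarrow 0$''; the content is the same.
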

\begin{proof}
For any $\ep>0$, using a similar argument in (\ref{mom.cond2}) together with Lemma \ref{Lm.G3}, one can take a neighborhood $N_\ep(\T_0)$ of $\T_0$ such that
\begin{eqnarray}\label{Lm.G4.2}
\E\sup_{\theta \in N_\ep(\T_0)}\big\| \paa l_{\A} (X_t,\theta)- \paa l_{\A} (X_t,\theta_0)\big\| < \ep.
\end{eqnarray}
Let $V_\ep(\T_0)=V_1(\T_0)\cap N_\ep(\T_0)$, where $V_1(\T_0)$ is the one given in Lemma \ref{Lm.G2}.
 Since $\HT$ converges almost surely to $\theta_0$, we have that for sufficiently large $n$,
\begin{eqnarray*}
&&\max_{ 1\leq k\leq n} \frac{k}{n}\big\|\paa \tilde{H}_{\A,k}(\T^*_{\A,n,k})+J_\A\big\|\\
&&\leq \max_{ 1\leq k\leq n} \frac{k}{n}\big\|\paa \tilde{H}_{\A,k}(\T^*_{\A,n,k})-\paa H_{\A,k}(\T^*_{\A,n,k})\big\|
+ \max_{ 1\leq k\leq n} \frac{k}{n}\big\|\paa H_{\A,k}(\T^*_{\A,n,k})-\paa H_{\A,k}(\T_0)\big\|\\
&&\hspace{0.5cm}
+ \max_{ 1\leq k\leq n} \frac{k}{n}\big\|\paa H_{\A,k}(\T_0)+J_\A\big\|\\
&&\leq \frac{1}{n} \sum_{t=1}^n \sup_{\T\in V_\ep(\T_0)} \big\| \paa\,\tilde{l}_\A(X_t;\T)-\paa\,l_\A(X_t;\T)\big\|
+\frac{1}{n} \sum_{t=1}^n \sup_{\theta\in V_\ep(\T_0)} \big\| \paa\,l_\A(X_t;\T)-\paa\,l_\A(X_t;\T_0)\big\|\\
&&\hspace{0.5cm}
+ \max_{ 1\leq k\leq n} \frac{k}{n}\big\|\paa H_{\A,k}(\T_0)+J_\A\big\|\\
&&:=I_n +II_n+ III_n\quad a.s.
\end{eqnarray*}
First, one can see that  $I_n=o(1)$ a.s. by (\ref{Lm.G2.1}).  Using (\ref{Lm.G4.2}) and the ergodicity of $\{l_\A (X_t;\T)\}$, we also have
\begin{eqnarray*}
\lim_{n\rightarrow\infty}II_n =
\E\sup_{\theta \in V_\ep(\T_0)}\big\| \paa l_\A(X_t;\T)- \paa l_\A(X_t;\T_0)\big\| < \ep\quad a.s.
\end{eqnarray*}
Finally, observe that $\|\paa H_{\A,n}(\T_0)+J_\A\|$ converges to zero almost surely by the ergodic theorem.  In the same way as in (\ref{Lm1.2}) and (\ref{Lm1.3}), it can be shown that $III_n=o(1)$ a.s., which completes the proof.
\end{proof}
\newpage
%################################  Proof of Theorem 1 ####################################################################
\noindent{\bf Proof of Theorem \ref{Thm3}}.\\
We first show in (\ref{paH_final2}) that
\begin{align}\label{til.I1}
 \tilde I_{\A,n}(s)&:=\frac{[ns]}{\sqrt{n}}\pa_{\theta}\tilde{H}_{\A,[ns]}(\T_0)+\frac{[ns]}{n}\paa \tilde{H}_{\A,[ns]}(\T^*_{\A,n,s})J_\A^{-1}\,\sqrt{n}\,\pa_{\theta}\tilde{H}_{\A,n}(\T_0)\nonumber\\
&\stackrel{w}{\longrightarrow}
 \sqrt{k(\A)} J_{1,\A}^{1/2}B^o_D(s)\quad \rm{in}\ \ \mathbb{D}\,\big( [0,1],\, \mathbb{R}^D\big).
\end{align}
Due to Lemma \ref{Lm.G1}, we have
\[\tilde I^o_{\A,n}(s):=\frac{[ns]}{\sqrt{n}}\pa_{\theta}\tilde{H}_{\A,[ns]}(\T_0)-\frac{[ns]}{n} \sqrt{n}\,\pa_{\theta}\tilde{H}_{\A,n}(\theta_0)
\stackrel{w}{\longrightarrow}
 \sqrt{k(\A)} J_{1,\A}^{1/2}B^o_D(s)\quad \rm{in}\ \ \mathbb{D}\,\big( [0,1],\, \mathbb{R}^D\big).\]
Observe that $\sqrt{n}\,\pa_{\theta}\tilde{H}_{\A,n}(\T_0)=O_P(1)$ by Lemma \ref{Lm.G1} with $s=1$. Then, it follows from Lemma \ref{Lm.G4} that
\begin{align*}
\sup_{0\leq s \leq1}\big\| \tilde I_{\A,n}(s)-\tilde I^o_{\A,n}(s)\big\|
&=\sup_{0\leq s \leq1}\Big\|\frac{[ns]}{n}\paa \tilde{H}_{\A,[ns]}(\T^*_{\A,n,s})J_\A^{-1}\,\sqrt{n}\,\pa_{\theta}\tilde{H}_{\A,n}(\T_0)
+\frac{[ns]}{n}\sqrt{n}\,\pa_{\theta}\tilde{H}_{\A,n}(\T_0)\Big\|\\
&\leq \big\|J_\A^{-1} \sqrt{n}\,\pa_{\theta}\tilde{H}_{\A,n}(\T_0) \big\|\,\max_{ 1\leq k\leq n} \frac{k}{n}\big\|\paa \tilde{H}_{\A,k}(\T^*_{\A,n,k})+J_\A\big\|=o(1)\quad a.s.,
\end{align*}
where $\theta^*_{\A,n,k}$ denotes $\theta^*_{\A,n,k/n}$, and thus (\ref{til.I1}) is obtained.

Next, note from Proposition \ref{Prop1} that $\sqrt{n}(\hat\T_{\A,n}-\T_0)=O_p(1)$. By Lemma \ref{Lm.G4}, we have
\[\sup_{0\leq s\leq1} \frac{[ns]}{n} \big\|\paa \tilde{H}_{\A,[ns]}(\T^*_{\A,n,s}) \big\|
\leq\max_{ 1\leq k\leq n} \frac{k}{n} \big\|\paa \tilde{H}_{\A,k}(\T^*_{\A,n,k})+J_\A\big\|
+\big\|J_\A\big\|=O_P(1),\]
and
\begin{align*}
\big\|J_\A^{-1}(B_{\A,n} +J_\A)\sqrt n(\hat\T_{\A,n}-\T_0)\big\| &\leq \big\|J_\A^{-1}\big\|\max_{ 1\leq k\leq n} \frac{k}{n}\big\| \paa \tilde{H}_{\A,k}(\T^*_{\A,n,k})+J_\A\big\|\big\|\sqrt n(\hat\T_{\A,n}-\T_0)\big\|\\
& =o_P(1),
\end{align*}
and consequently,
\begin{eqnarray*}
\sup_{0\leq s\leq1} \bigg\|\frac{[ns]}{\sqrt{n}}\pa_{\theta}\tilde{H}_{\A,[ns]}(\hat\T_{\A,n})-\tilde I_{\A,n}(s)\bigg\|=o_P(1),
\end{eqnarray*}
which together (\ref{til.I1}) establishes the theorem. \hfill{$\Box$}\vspace{0.2cm}\\

%#################################  Lemma 9  ##########################################
\begin{lemma}\label{Lm.G6}Suppose that the assumptions {\bf A1}-{\bf A4} in Theorem \ref{Thm3} hold. Then, under $H_0$, we have
\begin{eqnarray*}
\frac{1}{n}\sum_{t=1}^n \pa_\T \tilde l_\A(X_t;\hat \T_\A) \pa_{\T'} \tilde l_\A(X_t;\hat \T_\A) \stackrel{P}{\longrightarrow} \E\big[\pa_\T l_\A(X;\T_0)\pa_{\T'} l_\A(X;\T_0)\big].
\end{eqnarray*}
\end{lemma}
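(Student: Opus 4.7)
The plan is to decompose the empirical quantity at $\hat{\T}_{\A,n}$ into three terms that can be controlled separately:
\begin{align*}
&\frac{1}{n}\sum_{t=1}^n \pa_\T \tilde l_\A(X_t;\hat\T_{\A,n}) \pa_{\T'} \tilde l_\A(X_t;\hat\T_{\A,n})\\
&= \Big(\frac{1}{n}\sum_{t=1}^n \pa_\T \tilde l_\A(X_t;\hat\T_{\A,n}) \pa_{\T'} \tilde l_\A(X_t;\hat\T_{\A,n}) - \frac{1}{n}\sum_{t=1}^n \pa_\T l_\A(X_t;\hat\T_{\A,n}) \pa_{\T'} l_\A(X_t;\hat\T_{\A,n})\Big)\\
&\quad + \Big(\frac{1}{n}\sum_{t=1}^n \pa_\T l_\A(X_t;\hat\T_{\A,n}) \pa_{\T'} l_\A(X_t;\hat\T_{\A,n}) - \E\big[\pa_\T l_\A(X_t;\hat\T_{\A,n}) \pa_{\T'} l_\A(X_t;\hat\T_{\A,n})\big]\Big)\\
&\quad + \E\big[\pa_\T l_\A(X_t;\hat\T_{\A,n}) \pa_{\T'} l_\A(X_t;\hat\T_{\A,n})\big].
\end{align*}
I would bound the first bracket using (\ref{Lm.G2.2}): since $\hat\T_{\A,n}\to\T_0$ almost surely by Proposition \ref{Prop1}, for sufficiently large $n$ we have $\hat\T_{\A,n}\in V_1(\T_0)$, so the first bracket is dominated by
\[\frac{1}{n}\sum_{t=1}^{n}\sup_{\theta \in V_1(\T_0)}\big\|\pa_\T  l_\A(X_t;\theta) \pa_{\T'}  l_\A(X_t;\theta) -\pa_\T  \tilde l_\A(X_t;\theta) \pa_{\T'}  \tilde l_\A(X_t;\theta)\big\| = o(1)\ a.s.\]

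For the second bracket, I would invoke a uniform ergodic theorem on a small closed neighborhood $V_\ep(\T_0)\subset V_2(\T_0)$ of $\T_0$. The summand $\T\mapsto \pa_\T l_\A(X_t;\T)\pa_{\T'} l_\A(X_t;\T)$ is continuous in $\T$ (it is built from $h_\A(\sigma_t^2)$, $(1/\sigma_t^2)^{\A/2+1}$ and $\pa_\T\sigma_t^2$, which are all continuous), the process $\{X_t\}$ is strictly stationary and ergodic under the assumptions of Section \ref{Sec:3}, and Lemma \ref{Lm.G3} supplies the integrable envelope $\E\sup_{\T\in V_2(\T_0)}\|\pa_\T l_\A(X_t;\T)\pa_{\T'} l_\A(X_t;\T)\|<\infty$. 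These are precisely the ingredients needed to conclude, via the uniform law of large numbers for stationary ergodic sequences (cf.\ Chapter 16 of Ferguson (1996)), that
\[\sup_{\T\in V_\ep(\T_0)}\Big\|\frac{1}{n}\sum_{t=1}^n \pa_\T l_\A(X_t;\T)\pa_{\T'} l_\A(X_t;\T) - \E\big[\pa_\T l_\A(X_t;\T)\pa_{\T'} l_\A(X_t;\T)\big]\Big\| \to 0\ \text{a.s.}\]
The a.s.\ consistency of $\hat\T_{\A,n}$ then kills the second bracket.

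Finally, the third term converges to $\E[\pa_\T l_\A(X;\T_0)\pa_{\T'} l_\A(X;\T_0)]$ because $\T\mapsto \E[\pa_\T l_\A(X_t;\T)\pa_{\T'} l_\A(X_t;\T)]$ is continuous at $\T_0$ by dominated convergence (again using the envelope in Lemma \ref{Lm.G3} and continuity in $\T$), combined with $\hat\T_{\A,n}\to\T_0$ a.s. Summing the three contributions yields the stated convergence in probability (indeed almost surely).

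The only real obstacle is establishing the uniform ergodic theorem used in the second step; this is standard once one has the continuity in $\T$ and an integrable envelope on a compact neighborhood, both of which are already in place. The rest is bookkeeping using the pieces already built in Lemmas \ref{Lm.G2} and \ref{Lm.G3} and the strong consistency provided by Proposition \ref{Prop1}.
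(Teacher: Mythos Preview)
Your proposal is correct and uses the same ingredients as the paper: (\ref{Lm.G2.2}) to pass from $\tilde l_\A$ to $l_\A$, the integrable envelope from Lemma \ref{Lm.G3}, ergodicity, and the strong consistency of $\hat\T_{\A,n}$. The only difference is in how the pieces are assembled. You split the $l_\A$-part into a uniform-LLN term plus continuity of $\T\mapsto \E[\pa_\T l_\A(X_t;\T)\pa_{\T'} l_\A(X_t;\T)]$ at $\T_0$; the paper instead uses the shrinking-neighborhood $\ep$-trick directly, choosing $N_\ep(\T_0)$ so that $\E\sup_{\T\in N_\ep(\T_0)}\|\pa_\T l_\A(X_t;\T)\pa_{\T'} l_\A(X_t;\T)-\pa_\T l_\A(X_t;\T_0)\pa_{\T'} l_\A(X_t;\T_0)\|<\ep$ and then applying the pointwise ergodic theorem at $\T_0$. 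These are equivalent packagings of the same argument (the shrinking-neighborhood step is precisely what underlies the uniform LLN you invoke), so there is no substantive gap. One small notational point: in your second and third brackets, $\E[\pa_\T l_\A(X_t;\hat\T_{\A,n})\pa_{\T'} l_\A(X_t;\hat\T_{\A,n})]$ should be read as $M(\hat\T_{\A,n})$ with $M(\T):=\E[\pa_\T l_\A(X_t;\T)\pa_{\T'} l_\A(X_t;\T)]$, i.e., the expectation is over $X_t$ with $\T$ held fixed; making this explicit would avoid ambiguity.
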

\begin{proof}
Using the first result in Lemma \ref{Lm.G3}, we can also take a neighborhood $N_\ep(\theta_0)$ such that
\begin{eqnarray}\label{Lm.G6.1}
&&\lim_{n\rightarrow\infty}\frac{1}{n}\sum_{t=1}^n\sup_{\theta\in
N_\ep(\theta_0)}\big\| \pa_{\theta}
l_\A(X_t;\theta)\,\pa_{\theta'} l_\A(X_t;\theta)- \pa_{\theta}
l_\A(X_t;\theta_0)\,\pa_{\theta'} l_\A(X_t;\theta_0)\big\| \nonumber\\
&&=
\E\sup_{\theta \in N_\ep(\theta_0)}\big\| \pa_{\theta}
l_\A(X_t;\theta)\,\pa_{\theta'} l_\A(X_t;\theta)- \pa_{\theta}
l_\A(X_t;\theta_0)\,\pa_{\theta'} l_\A(X_t;\theta_0)\big\| < \ep\quad a.s.
\end{eqnarray}
Since $\hat\theta_n$ converges to $\theta_0$ almost surely, the lemma follows from (\ref{Lm.G2.2}), (\ref{Lm.G6.1}) and the ergodic theorem.
\end{proof}
%############################### References ###################################################################
\noindent{\bf Acknowledgments}\\
This research was supported by Basic Science Research Program through the National Research Foundation of Korea (NRF) funded by the
Korean government (MSIT)(NRF-2016R1C1B1015963), (ME)(NRF-2019R1I1A3A01056924)(J.Song) and (MSIT)(NRF-2019R1G1A1099809)(J.Kang).

\vspace{1.cm}
\noindent{\bf References}
 \begin{description}
\item Aue, A. and Horváth, L. (2013). Structural breaks in time series. {\it Journal of Time Series Analysis} {\bf 34}, 1–16.
\item Basu, A., Harris, I. R., Hjort, N. L. and Jones, M. C. (1998). Robust and efficient estimation by minimizing a density power divergence. {\it Biometrika} {\bf 85}, 549-559.
\item Basu, A., Mandal, A., Martin, N. and Pardo, L. (2013). Testing statistical hypotheses based on the density power divergence. {\it Annals of the Institute of Statistical Mathematics} {\bf 65}, 319-348.
\item Basu, A., Mandal, A., Martin, N. and Pardo, L. (2016).Generalized Wald-type tests based on minimum density power divergence estimators. {\it Statistics} {\bf 50}, 1-26.
\item Batsidis, A., Horváth, L., Martín, N., Pardo, L. and Zografos, K. (2013). Change-point detection in multinomial data using phi-divergence test statistics. {\it Journal of Multivariate Analysis} {\bf 118}, 53-66.
\item Berkes, I. and Horv\'{a}th, L. (2004). The efficiency of the estimators of the parameters in GARCH processes.  {\it The Annals of Statistics} {\bf 32}, 633-665.
\item Berkes, I., Horv\'{a}th, L. and Kokoszka, P. (2004). Testing for parameter constancy in GARCH(p,q) models. {\it Statistics \& Probability Letters} {\bf 4}, 263-273.
\item Bougerol, P. and Picard, N. (1992). Stationarity of GARCH processes and of some nonnegative time series. {\it Journal of Econometrics} {\bf 52}, 115–127.
%\item Broniatowski, M. and  Keziou, A. (2009). Parametric estimation and tests through divergences and the duality technique. {\it Journal of Multivariate Analysis} {\bf100}, 16-36.
 \item  Durio, A. and Isaia, E.D. (2011). The minimum density power divergence approach in building robust regression models. {\it Informatica} {\bf 22}, 43--56.
%\item Dierckx, G., Goegebeur, Y. and Guillou, A. (2013). An asymptotically unbiased minimum density power divergence estimator for the Pareto-tail index. {\it Journal of Multivariate Analysis} {\bf 121}, 70-86.
\item Fearnhead, P. and Rigaill, G. (2018). Changepoint detection in the presence of outliers. {\it Journal of the American Statistical Association}, 1-15.
\item Ferguson, T.S. (1996). {\it A Course in Large Sample Theory}. Chapman \& Hall/CRC, New York.
 \item Francq, C. and Zako\"{\i}an, J.-M. (2004). Maximum likelihood estimation of pure GARCH and ARMA-GARCH processes. {\it Bernoulli} {\bf 10}, 605-637.
 \item Fujisawa, H. and Eguchi, S. (2006). Robust estimation in the normal mixture model. {\it  Journal of Statistical Planning and Inference} {\bf 136}, 3989–4011.
\item Fujisawa, H., and Eguchi, S. (2008). Robust parameter estimation with a small bias against heavy contamination. {\it Journal of Multivariate Analysis} {\bf 99}, 2053-2081.
\item Ghosh, A. and Basu, A. (2017). The minimum S-divergence estimator under continuous models: the Basu-Lindsay approach. {\it Statistical Papers} {\bf 58}, 341-372.
\item Ghosh, A., Basu, A. and  Pardo, L. (2015). On the robustness of a divergence based test of simple statistical hypotheses. {\it  Journal of Statistical Planning and Inference} {\bf 161}, 91-108.
\item Ghosh, A., Mandal, A., Martín, N. and Pardo, L. (2016). Influence analysis of robust Wald-type tests. {\it Journal of Multivariate Analysis} {\bf 147}, 102-126.
\item Hillebrand, E. (2005). Neglecting parameter changes in GARCH models. {\it Journal of Econometrics} {\bf 129}, 121-138.
 \item Horv\'{a}th, L. and Parzen, E. (1994). Limit theorems for Fisher-score change processes. {\it Change-point problems, IMS Lecture Notes.Monograph Series} {\bf 23}, 157-169.
\item Horváth, L. and Rice, G. (2014). Extensions of some classical methods in change point analysis. {\it TEST} {\bf 23}, 219-255.
\item Kang, J. and Lee, S. (2014). Minimum density power divergence estimator for Poisson AR models. {\it  Computational Statistics \& Data Analysis} {\bf 80}. 44-56.
\item Kang, J. and Song, J. (2015). Robust parameter change test for Poisson autoregressive models. {\it Statistics \& Probability Letters} {\bf 104}, 14-21.
\item Kulperger, R. and Yu, H. (2005). High moment partial sum processes of residuals in GARCH models and their applications. {\it The Annals of Statistics} {\bf 33}, 2395–2422.
%\item Kim, B. and Lee, S. (2013). Robust estimation for copula parameter in SCOMDY model. {\it Journal of Time Series Analysis} {\bf 34}, 302-314.
\item Lee, S. and Na, O. (2005). Test for parameter change based on the estimator minimizing density-power divergence measures. {\it Annals of the Institute of Statistical Mathematics} {\bf 57}, 553-573.
\item Lee, S. and Song, J. (2009). Minimum density power divergence estimator for GARCH models. {\it TEST} {\bf 18}, 316-341.
\item Maronna, R. A., Martin, R. D. and Yohai, V. J.(2006). {\it Robust Statistics: Theory and Methods}. Wiley.
\item Pardo, L. (2006). {\it Statistical Inference Based on Divergence Measures}, Chapman and Hall/CRC.
\item Robbins, M., Gallagher, C., Lund, R. and Aue, A. (2011). Mean shift testing in correlated data. {\it Journal of Time Series Analysis} {\bf32}, 498-511.
\item Song, J. (2017). Robust estimation of dispersion parameter in discretely observed diffusion processes. {\it Statistica Sinica} {\bf 27}, 373-388.
\item Song, J. (2020). Robust test for dispersion parameter change in discretely observed diffusion processes.{\it  Computational Statistics \& Data Analysis} {\bf 142}, 106832
\item Song, J. and Kang, J. (2018). Parameter change tests for ARMA–GARCH models. {\it Computational Statistics \& Data Analysis} {\bf 121}, 41-56.
%\item Toma, A. and Leoni-Aubin, S. (2010). Robust tests based on dual divergence estimators and saddlepoint approximations. {\it Journal of Multivariate Analysis} {\bf 101}, 1143-1155.
\item Tsay, R. S. (1988). Outliers, level shifts, and variance changes in time series. {\it Journal of Forecasting} {\bf 7}, 1-20.
\item Warwick, J., (2005). A data-based method for selecting tuning parameters in minimum distance estimators. {\it  Computational Statistics \& Data Analysis} {\bf 48}, 571--585.
%\item Billingsley, P. (1968). {\it Convergence of probability measures}. Wiley, New York.

%\bibitem[Straumann and Mikosch (2006)]{Straumann (2006)}
%Straumann, D. and Mikosch, T. (2006).
%\newblock Quasi maximum likelihood estimation in conditionally heteroscedastic time series: a stochastic recurrence equations approach.
%\newblock \emph{The Annals of Statistics,} {\bf 34,} \penalty0 \penalty0
%  2449--2495.
\end{description}

\end{document}